\newtheorem{theorem}{Theorem}
\newtheorem{proposition}{Proposition}
\newtheorem{lemma}{Lemma}
\newtheorem{claim}{Claim}
\newtheorem{question}{Question}
\newcounter{example2}
\newtheorem{observation}[example2]{Observation}
\begin{document}

\begin{frontmatter}



\title{Triangle-free projective-planar graphs with diameter two: domination and characterization \footnote{This work is partially supported by the IFCAM project Applications of graph homomorphisms (MA/IFCAM/18/39)}}
 \author[label1]{{Dibyayan Chakraborty} }
 \author[label2]{{Sandip Das}}
  \author[label2]{{Srijit Mukherjee}}
   \author[label2]{{Uma kant Sahoo}}
 \author[label3]{{Sagnik Sen}}
 
 \address[label1] {Indian Institute of Science, Bengaluru}
 \address[label2]{Indian Statistical Institute, Kolkata, India}
 \address[label3]{Indian Institute of Technology Dharwad, India}

\author{}

\address{}

\begin{abstract}
In 1975, Plesn\'ik characterized all triangle-free planar graphs as having a diameter $2$. We characterize all triangle-free projective-planar graphs having a diameter $2$ and discuss some applications. In particular, the main result is applied to calculate the analogue of clique numbers for graphs, namely, colored mixed graphs, having different types of arcs and edges. 
\end{abstract}

\begin{keyword}
projective-planar \sep forbidden minor characterization \sep domination number



\end{keyword}

\end{frontmatter}



\section{Introduction and main results}
In 1975, Plesn\'ik~\cite{plesnik1975} characterized all triangle-free planar graphs\footnote{In this article, we use the notation and terminology of ``Introduction to Graph Theory'' by D. B. West~\cite{west}.} having diameter $2$ by proving the following result.

\begin{theorem}[Plesn\'ik 1975~\cite{plesnik1975}]\label{th plesnik}
A triangle-free planar graph $G$ has a diameter $2$ if and only if it is isomorphic to one of the following graphs:
\begin{enumerate}
    \item[(i)] $K_{1,n}$ for $n \geq 2$,
    
    \item[(ii)] $K_{2,n}$ for  $n \geq 2$,
    
    \item[(iii)] the graph $C_5(m,n)$ obtained by adding $(m+n)$ degree-$2$
    vertices to the $5$-cycle $C_5 = v_1v_2v_3v_4v_5v_1$,
    for  $m,n \geq 0$,
    in such a way that $m$ of the vertices are adjacent  to $v_1, v_3$ and $n$ of the vertices are adjacent to $v_1, v_4$. 
\end{enumerate}
\end{theorem}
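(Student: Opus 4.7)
The forward direction is a routine verification that each graph in (i)--(iii) is triangle-free, planar, and of diameter $2$; I focus on the converse.

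Let $G$ be a triangle-free planar graph of diameter $2$. I fix a vertex $v$ of maximum degree $\Delta$ and partition $V(G) = \{v\} \sqcup A \sqcup B$, where $A := N(v)$ and $B := V(G) \setminus N[v]$. Triangle-freeness forces $A$ to be an independent set, and the diameter-$2$ hypothesis then gives: (a) every $u \in B$ has a neighbor in $A$ (to reach $v$ in two steps); (b) when $B \neq \emptyset$, every $a \in A$ has a neighbor in $B$ (else a non-neighbor of $a$ in $B$ could not reach $a$ in two steps, since $A$ is independent and $v \not\sim u$ for $u \in B$); and (c) any two non-adjacent vertices of $B$ share a common neighbor in $A \cup B$.

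I case-split on $|B|$. If $B = \emptyset$, then $G = K_{1,\Delta}$, giving case (i). If $|B| = 1$, writing $B = \{u\}$, properties (a)--(c) force $u$ to be adjacent to every element of $A$, yielding $G = K_{2,\Delta}$ (case (ii)). The main work lies in the case $|B| \ge 2$. Here I first argue that any two vertices $u_1, u_2 \in B$ must be adjacent: otherwise, combining (c) with the independence of $A$ and the maximality of $\deg(v)$ forces every $a \in A$ to be adjacent to both $u_1$ and $u_2$, so $G$ contains the complete bipartite graph on $\{v, u_1, u_2\}$ and $A$; planarity then forces $|A| \le 2$, in contradiction with $v$ being of maximum degree. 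Hence $B$ induces a clique, and triangle-freeness yields $|B| \le 2$, so in fact $|B| = 2$, say $B = \{u_1, u_2\}$ with $u_1 \sim u_2$. Triangle-freeness of $\{a, u_1, u_2\}$ then forces each $a \in A$ to have a unique neighbor in $B$; combined with (b) and the independence of $A$, each such $a$ has degree exactly $2$. The resulting partition $A = A_1 \sqcup A_2$ with $A_i := N(u_i) \cap A$ and the cycle $v\,a_1\,u_1\,u_2\,a_2\,v$ (for any $a_1 \in A_1$, $a_2 \in A_2$) exhibit $G$ as $C_5(|A_1|-1, |A_2|-1)$, completing case (iii).

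\textbf{Main obstacle.} The step I expect to be hardest is establishing that every pair $u_1, u_2 \in B$ is adjacent when $|B| \ge 2$. The tension is between the diameter-$2$ constraint (demanding dense common neighborhoods) and planarity (excluding $K_{3,3}$ as a subgraph, or equivalently capping edges at $2|V|-4$ for triangle-free graphs). I anticipate that tracing how a hypothetical non-edge inside $B$ propagates, via (c), into forced $A$-$B$ adjacencies, and then detecting either the resulting forbidden bipartite configuration or a contradiction with $\deg(v) = \Delta$, will be the most delicate bookkeeping in the proof.
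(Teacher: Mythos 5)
Your reduction to the three cases $|B|=0,1,\ge 2$ is sound, and the endgame in each case (including the identification of $G$ with $C_5(|A_1|-1,|A_2|-1)$ once $B$ is known to be a single edge) is correct. The genuine gap is exactly the step you yourself single out as the ``main obstacle'': the claim that when $|B|\ge 2$, any two $u_1,u_2\in B$ are adjacent. You justify it by asserting that a non-edge $u_1u_2$ in $B$, ``combined with (c), the independence of $A$ and the maximality of $\deg(v)$, forces every $a\in A$ to be adjacent to both $u_1$ and $u_2$.'' That implication is false from those hypotheses alone. Take $V=\{v,a_1,a_2,a_3,u_1,u_2,w\}$ with edges $va_1,va_2,va_3$, $a_1u_1,a_1u_2,a_2u_1,a_2u_2$, $a_3w$, $wu_1,wu_2$: this graph is triangle-free, has diameter $2$, $v$ attains the maximum degree $3$, $A=\{a_1,a_2,a_3\}$ is independent, and $u_1,u_2\in B$ are non-adjacent, yet $a_3$ is adjacent to neither $u_1$ nor $u_2$ (it reaches both via $w\in B$). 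Of course this graph is not planar --- it contains a $K_{3,3}$-subdivision with branch sets $\{v,u_1,u_2\}$ and $\{a_1,a_2,w\}$ --- but that is precisely the point: planarity must be invoked \emph{inside} this step to rule out common neighbours of $a$ and $u_i$ living in $B$, not merely afterwards to bound $|A|$. As written, the case $|B|\ge 2$ is open, and the ``Main obstacle'' paragraph concedes that the required bookkeeping has not been carried out.

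For what it is worth, the paper does not prove this statement either --- it is quoted from Plesn\'ik --- but the paper's own machinery for the projective-planar generalization suggests a cleaner route than a maximum-degree vertex: Euler's formula gives $\delta(G)\le 3$ for triangle-free planar $G$, and one can instead anchor the analysis at a vertex of degree $1$, $2$ or $3$ (as in Lemmas 2--4 of the paper). With a degree-$2$ vertex $v$ and $N(v)=\{v_1,v_2\}$, the sets $C=N(v_1)\cap N(v_2)\setminus\{v\}$ and $S_i=N(v_i)\setminus(C\cup\{v\})$ are forced by diameter $2$ to induce a complete bipartite graph between $S_1$ and $S_2$, and planarity ($K_{3,3}$-freeness) immediately caps $\min(|S_1|,|S_2|)\le 1$, yielding $K_{2,n}$ and $C_5(m,n)$ with very little case analysis; the degree-$3$ case is then excluded for planar graphs by a short counting or minor argument. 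If you want to keep your maximum-degree framework, you will need to prove the clique property of $B$ by explicitly locating a $K_{3,3}$-subdivision (or violating $|E|\le 2|V|-4$) whenever $B$ contains a non-edge, which is where the real work of the theorem lies.
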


We prove the analogue of Plesn\'ik's result for \textit{projective-planar graphs}, that is, graphs that can be embedded on the non-orientable surface of Euler genus one (also known as the real projective plane) without their edges crossing each other except, maybe,  on the vertices. 
For convenience, let us refer to the
graphs listed in Theorem~\ref{th plesnik} as 
\textit{Plesn\'ik graphs}.

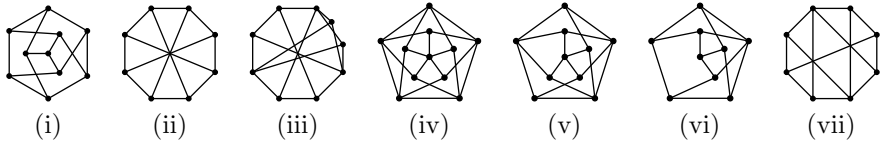
\begin{figure}[ht]
	\centering
	\begin{tabular}{ccccccc} 
    \begin{tikzpicture}[scale=0.3]
	\def\A{0,2}
    \def\B{1.73,1}
    \def\C{1.73,-1}
    \def\D{0,-2}
    \def\E{-1.73,-1}
    \def\F{-1.73,1}
    \def\G{0.5,0.86}
    \def\H{0.5,-0.86}
    \def\I{-1,0}
    \def\J{0,0}
    \renewcommand{\vertexset}{(a,\A,black),(b,\B,black),(c,\C,black),(d,\D,black),(e,\E,black),(f,\F,black),(g,\G,black),(h,\H,black),(i,\I,black),(j,\J,black)};
   \renewcommand{\edgeset}{(a,b),(b,c), (c,d),(d,e),(e,f),(f,a),(a,i),(d,i),(f,g),(g,c),(e,h),(b,h),(i,j),(g,j),(h,j)}
    \renewcommand{\defradius}{.1}
	\drawgraph

\end{tikzpicture} &
\begin{tikzpicture}[scale=0.3]
	\def\A{0.83,2}
    \def\B{2,0.83}
    \def\C{2,-0.83}
    \def\D{0.83,-2}
    \def\E{-0.83,-2}
    \def\F{-2,-0.83}
    \def\G{-2,0.83}
    \def\H{-0.83,2}
    
    \renewcommand{\vertexset}{(a,\A,black),(b,\B,black),(c,\C,black),(d,\D,black),(e,\E,black),(f,\F,black),(g,\G,black),(h,\H,black)};
   \renewcommand{\edgeset}{(a,b),(b,c), (c,d),(d,e),(e,f),(f,g),(g,h),(h,a),(a,e),(b,f),(c,g),(d,h)}
    \renewcommand{\defradius}{.1}
	\drawgraph

\end{tikzpicture} &
\begin{tikzpicture}[scale=0.3]
	\def\A{0.83,2}
    \def\B{1.5,1.4}
    \def\C{2,-0.83}
    \def\D{0.83,-2}
    \def\E{-0.83,-2}
    \def\F{-2,-0.83}
    \def\G{-2,0.83}
    \def\H{-0.83,2}
    \def\I{2,0.4}
    
    \renewcommand{\vertexset}{(a,\A,black),(b,\B,black),(c,\C,black),(d,\D,black),(e,\E,black),(f,\F,black),(g,\G,black),(h,\H,black),(i,\I,black)};
   \renewcommand{\edgeset}{(a,b),(b,c), (c,d),(d,e),(e,f),(f,g),(g,h),(h,a),(a,e),(b,f),(c,g),(d,h),(i,f),(i,a),(i,c)}
    \renewcommand{\defradius}{.1}
	\drawgraph

\end{tikzpicture} &
\begin{tikzpicture}[scale=0.34]
	\def\A{0,2}
    \def\B{1.9,0.62}
    \def\C{1.18,-1.62}
    \def\D{-1.18,-1.62}
    \def\E{-1.9,0.62}
    \def\F{0,1}
    \def\G{0.95,0.31}
    \def\H{0.59,-0.81}
    \def\I{-0.59,-0.81}
    \def\J{-0.95,0.31}
    \def\K{0,0}
    \renewcommand{\vertexset}{(a,\A,black),(b,\B,black),(c,\C,black),(d,\D,black),(e,\E,black),(f,\F,black),(g,\G,black),(h,\H,black),(i,\I,black),(j,\J,black),(k,\K,black)};
   \renewcommand{\edgeset}{(a,b),(b,c), (c,d),(d,e),(e,a),(a,g),(a,j),(b,f),(b,h),(c,g),(c,i),(d,h),(d,j),(e,i),(e,f),(k,f),(k,g),(k,h),(k,i),(k,j)}
    \renewcommand{\defradius}{.1}
	\drawgraph

\end{tikzpicture} &
\begin{tikzpicture}[scale=0.34]
	\def\A{0,2}
    \def\B{1.9,0.62}
    \def\C{1.18,-1.62}
    \def\D{-1.18,-1.62}
    \def\E{-1.9,0.62}
    \def\F{0,1}
    \def\G{0.95,0.31}
    \def\H{0.59,-0.81}
    \def\I{-0.59,-0.81}
    \def\K{0,0}
    \renewcommand{\vertexset}{(a,\A,black),(b,\B,black),(c,\C,black),(d,\D,black),(e,\E,black),(f,\F,black),(g,\G,black),(h,\H,black),(i,\I,black),(k,\K,black)};
   \renewcommand{\edgeset}{(a,b),(b,c), (c,d),(d,e),(e,a),(a,g),(b,f),(b,h),(c,g),(c,i),(d,h),(e,i),(e,f),(k,f),(k,g),(k,h),(k,i)}
    \renewcommand{\defradius}{.1}
	\drawgraph

\end{tikzpicture} &
\begin{tikzpicture}[scale=0.34]
	\def\A{0,2}
    \def\B{1.9,0.62}
    \def\C{1.18,-1.62}
    \def\D{-1.18,-1.62}
    \def\E{-1.9,0.62}
    \def\F{0,1}
    \def\G{0.95,0.31}
    \def\H{0.59,-0.81}
    \def\K{0,0}
    \renewcommand{\vertexset}{(a,\A,black),(b,\B,black),(c,\C,black),(d,\D,black),(e,\E,black),(f,\F,black),(g,\G,black),(h,\H,black),(k,\K,black)};
   \renewcommand{\edgeset}{(a,b),(b,c), (c,d),(d,e),(e,a),(a,g),(b,f),(b,h),(c,g),(d,h),(e,f),(k,f),(k,g),(k,h)}
    \renewcommand{\defradius}{.1}
	\drawgraph

\end{tikzpicture}&
\begin{tikzpicture}[scale=0.3]
	\def\A{0.83,2}
    \def\B{2,0.83}
    \def\C{2,-0.83}
    \def\D{0.83,-2}
    \def\E{-0.83,-2}
    \def\F{-2,-0.83}
    \def\G{-2,0.83}
    \def\H{-0.83,2}
    
    \renewcommand{\vertexset}{(a,\A,black),(b,\B,black),(c,\C,black),(d,\D,black),(e,\E,black),(f,\F,black),(g,\G,black),(h,\H,black)};
   \renewcommand{\edgeset}{(a,b),(b,c), (c,d),(d,e),(e,f),(f,g),(g,h),(h,a),(a,d),(b,f),(c,h),(d,g),(e,h)}
    \renewcommand{\defradius}{.1}
	\drawgraph

\end{tikzpicture}\\
(i) & (ii) & (iii) & (iv) & (v) & (vi) & (vii)
    \end{tabular}
    \caption{(i) The  Petersen graph $P_{10}$, 
    (ii) The Wagner graph $W_{8}$, 
    (iii) The graph $W_{8}^+$,
    (iv) The Gr\"otzsch graph $M_{11}$, 
    (v) The graph $M^-_{11}$, 
    (vi) The graph $M^=_{11}$, 
    (vii) The graph $K^*_{3,4}$.} 
    \label{fig:family-D}
\end{figure}

\begin{theorem}\label{th t-free ppg}
A triangle-free projective-planar graph $G$ has diameter $2$ if and only if it is isomorphic to one of the following:
\begin{enumerate}
    \item[(i)] a Plesn\'ik graph,
    
    \item[(ii)] $K_{3,3}$ or $K_{3,4}$,
    
    \item[(iii)] the graph $K_{3,3}(n)$ obtained by adding 
    $(n-1)$ parallel edges $e_2, e_3, \ldots, e_n$ to one of the edges $e_1$ of $K_{3,3}$  and subdividing each $e_i$ exactly once for $n \geq 1$,
    
    \item[(iv)] the graph $K_{3,4}(n)$ obtained by adding 
    $(n-1)$ parallel edges $e_2, e_3, \ldots, e_n$ to one of the edges $e_1$ of $K_{3,4}$  and subdividing each $e_i$ exactly once for $n \geq 1$,

    \item[(v)] one of the seven graphs depicted in Figure~\ref{fig:family-D}.
\end{enumerate}
\end{theorem}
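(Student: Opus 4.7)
We split the proof into the ``if'' and ``only if'' directions. The \emph{if} direction requires verifying that every listed graph is triangle-free, has diameter $2$, and embeds in the projective plane. The Plesn\'ik graphs are planar and handled by Theorem~\ref{th plesnik}. For $K_{3,3}$ and $K_{3,4}$, triangle-freeness and diameter $2$ are immediate, and projective-planar embeddings are classical. For the families $K_{3,3}(n)$ and $K_{3,4}(n)$, the $n-1$ additional degree-$2$ vertices can be placed in a face incident to $e_1$ of a projective-planar embedding of $K_{3,3}$ (resp.\ $K_{3,4}$); they create no triangles and sit at distance $\leq 2$ from every other vertex via either endpoint of $e_1$. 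For the seven sporadic graphs of Figure~\ref{fig:family-D}, I would exhibit explicit projective-planar drawings (via rotation systems with one crosscap) and check the combinatorial conditions by inspection.

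For the \emph{only if} direction, let $G$ be a triangle-free projective-planar graph of diameter $2$ on $n$ vertices and $m$ edges. If $G$ is planar, Theorem~\ref{th plesnik} places $G$ in family (i); so assume $G$ is properly projective-planar. Two global inequalities drive the argument: (a) Euler's formula $n-m+f=1$ on the projective plane, together with triangle-freeness (every face has length $\geq 4$, so $4f \leq 2m$), gives
\[
m \leq 2n-2;
\]
(b) the diameter-$2$ condition, combined with the fact that adjacent vertices have no common neighbour in a triangle-free graph, yields
\[
\sum_{v} \binom{d(v)}{2} \;\geq\; \binom{n}{2}-m.
\]
Rearranging using $\sum d(v)=2m$ gives $\sum_{v} d(v)^2 \geq n(n-1)$, and together with (a) this forces $\Delta(G) \geq n/4$. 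Hence $G$ admits a vertex $v$ of linear degree whose neighbourhood $A=N(v)$ is an independent set dominating $V\setminus\{v\}$.

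The remainder is a structural case analysis, which I would organise by the domination number $\gamma(G)$. When $\gamma(G)=1$, a dominating vertex together with triangle-freeness forces $G=K_{1,n-1}$. When $\gamma(G)=2$ with dominating pair $\{a,b\}$, I partition $V\setminus\{a,b\}$ into $N(a)\setminus N(b)$, $N(b)\setminus N(a)$, and $N(a)\cap N(b)$; triangle-freeness kills edges within $N(a)\cap N(b)$ and between $\{a\}\cup N(b)$ (resp.\ $\{b\}\cup N(a)$) and their own ``side'', the diameter-$2$ condition forces near-completeness of the bipartite pieces, and $m \leq 2n-2$ bounds the sizes of $N(a)\setminus N(b)$ and $N(b)\setminus N(a)$. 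This branch yields $K_{2,n}$, $K_{3,3}$, $K_{3,4}$, $K_{3,3}(n)$, and $K_{3,4}(n)$ (and the Plesn\'ik families $K_{2,n}$, $C_5(m,n)$ when an embedding is actually planar). When $\gamma(G) \geq 3$, the two global inequalities become essentially tight, bounding $n$ by a small constant; an exhaustive check of the resulting small triangle-free projective-planar diameter-$2$ candidates identifies $G$ with one of the seven graphs of Figure~\ref{fig:family-D}. The main obstacle I anticipate is this final case: ruling out any ``near-miss'' beyond the Petersen, Wagner, Gr\"otzsch and $K_{3,4}^*$ variants, which I expect to handle by combining Archdeacon's list of forbidden minors for projective-planarity with direct minor/subdivision checks certifying that any alternative candidate either contains a triangle, exceeds diameter $2$, or forces a non-projective-planar minor.
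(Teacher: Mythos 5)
Your architecture is genuinely different from the paper's: you organize the converse by domination number and drive it with two global inequalities, whereas the paper uses Euler's formula only to get $\delta(G)\le 3$ and then runs separate structural lemmas for $\delta=1$, $\delta=2$, $3$-regular, and $\delta=3$ with $\Delta\ge 4$ (Lemmas~\ref{lm:min-deg-2}--\ref{lm:max-deg-4}), with the whole analysis powered by three forbidden minors, $K_{3,5}$, $K_{4,4}^-$ and $F_0$ of Figure~\ref{fig:family-F}. The difference would be acceptable if your branches closed, but two of them do not. In the $\gamma(G)=2$ branch, $m\le 2n-2$ is too weak to isolate the listed families: with $s_1=|N(a)\setminus N(b)|$ and $s_2=|N(b)\setminus N(a)|$, after the diameter condition forces a complete bipartite graph between these sets, Euler's bound reduces to $(s_1-1)(s_2-1)\le 3$, which admits $(s_1,s_2)=(4,2)$; that candidate meets $m=2n-2$ with equality yet contains a $K_{3,5}$-minor and is not projective-planar. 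So even here you cannot dispense with the minor-contraction arguments that the paper's Lemma~\ref{lm:min-deg-2} performs.

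The more serious gap is the case $\gamma(G)\ge 3$, which you dispose of by asserting that the two inequalities ``become essentially tight, bounding $n$ by a small constant.'' No such implication holds: $m\le 2n-2$ and $\sum_v d(v)^2\ge n(n-1)$ are satisfied with room to spare by graphs of every order (e.g.\ $K_{2,n}$), and you give no mechanism by which $\gamma\ge 3$ interacts with them to cap $n$. The finiteness of diameter-$2$ graphs with domination number at least $3$ on a fixed surface is itself a nontrivial theorem of Goddard and Henning, and pinning down the seven sporadic graphs is precisely where the paper spends nearly all of its effort --- the $3$-regular analysis of Lemma~\ref{lm:3-regular} and the twenty-odd claims of Lemma~\ref{lm:max-deg-4}, each an explicit contraction producing $K_{3,5}$, $K_{4,4}^-$ or $F_0$. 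As written, your proposal asserts rather than proves the hardest part of the theorem, and the concluding ``exhaustive check'' is not backed by any established bound on the order of the candidates.
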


Let us now discuss a few more results regarding properties of graphs having small diameters that can be embedded on a given surface $\mathbb{S}$ to place our work into context. Since $\mathbb{S}$ has an Euler characteristic, it follows from the Euler's formula~\cite{mohar2001} that any graph embedded in $\mathbb{S}$ has a bounded minimum degree. This implies that if we consider such a graph with diameter 2, then its domination number is at most its minimum degree.

\medskip

In 1996, MacGillivray and Seyffarth~\cite{macgillivray1996} proved that planar graphs with diameter $2$ have domination numbers at most $3$. In 2002, Goddard and Henning~\cite{goddard2002} showed that there is exactly one planar graph  having diameter $2$ 
that has a domination number equal to $3$. 
They also proved that for each surface (orientable or non-orientable) $\mathbb{S}$, there are finitely many graphs having diameter $2$ and domination number at least $3$ that can be embedded in $\mathbb{S}$.
A natural question to ask  in this context is the following. 

\begin{question}\label{Q g}
Given a surface $\mathbb{S}$, can you find the list  of all 
graphs having diameter $2$ and domination number at least $3$ 
that can be embedded on $\mathbb{S}$?
\end{question}

 As we just mentioned,
Goddard and Henning~\cite{goddard2002} answered 
Question~\ref{Q g} when $\mathbb{S}$ is the sphere (or equivalently, the Euclidean plane).
However, it seems that the question can be very difficult to answer in general as the tight upper bounds on the domination number for a family of graphs that can be embedded on a surface, other than the sphere, is yet to be found. 
Therefore, it makes sense to 
ask the following natural restriction instead. 

\begin{question}\label{Q tf}
Given a surface $\mathbb{S}$, can you find the list of all 
triangle-free graphs having a diameter $2$ 
and domination number at least $3$ 
that can be embedded on $\mathbb{S}$?
\end{question}

Notice that, Plesn\'ik's characterization implies that the
answer for Question~\ref{Q tf} is the empty list when $\mathbb{S}$
is the sphere. On the other hand, the following immediate corollary of Theorem~\ref{th t-free ppg} 
answers the question when $\mathbb{S}$ is the projective plane, along with implying that the domination number of 
triangle-free projective-planar graphs having diameter $2$ is at most three (following our earlier discussions on Euler's characteristic). Note that the domination number of graphs shown in Figure~\ref{fig:family-D} is three.

\begin{theorem}\label{th dom}
Let $G$ be a triangle-free projective-planar graph having a diameter $2$. Then 
\begin{enumerate}
\item[(a)] The domination number $\gamma(G)$ of $G$ is at most $3$. 
    
\item[(b)] If $\gamma(G)=3$, then $G$ is isomorphic to one of the seven graphs depicted in Figure~\ref{fig:family-D}. 
\end{enumerate}
\end{theorem}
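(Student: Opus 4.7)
Given Theorem~\ref{th t-free ppg}, the plan is to pass through the classified families one at a time and bound the domination number of each. The substantive structural content is already packaged in the classification, so what remains is two short computations for the infinite families and a small finite case analysis for the seven exceptional graphs.

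First, for the infinite families in parts~(i)--(iv) of Theorem~\ref{th t-free ppg}, I would exhibit an explicit dominating set of size at most $2$. For the Plesn\'ik graphs, $K_{1,n}$ is dominated by its centre, $K_{2,n}$ by its two high-degree vertices, and $C_5(m,n)$ by $\{v_1, v_3\}$, since $v_1$ covers $v_2$, $v_5$ and all added vertices while $v_3$ covers $v_4$. For $K_{3,3}$ and $K_{3,4}$, choosing one vertex from each part already dominates the graph. For the families $K_{3,3}(n)$ and $K_{3,4}(n)$, the two endpoints of the multiplied edge $e_1$ dominate the vertices of the underlying $K_{3,3}$ or $K_{3,4}$ together with all subdivision vertices. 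This yields $\gamma \leq 2$ in each of these cases.

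Second, for the seven graphs in Figure~\ref{fig:family-D}, I would verify $\gamma = 3$ by direct inspection: exhibit a dominating set of size $3$ in each graph, and rule out dominating sets of size $2$ by a finite case check over vertex pairs, equivalently by verifying that no two closed neighbourhoods cover $V(G)$. The cases of the Petersen graph $P_{10}$ and the Gr\"otzsch graph $M_{11}$ are classical, and the remaining five graphs are small enough for the verification to be routine. Part~(a) of Theorem~\ref{th dom} then follows because every graph appearing in the classification has $\gamma \leq 3$, and part~(b) follows because the only graphs in the classification that attain equality are precisely the seven depicted in Figure~\ref{fig:family-D}.

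The only potential obstacle is the uniformity of the finite check for the seven graphs, but each such verification is a bounded and mechanical computation, so no genuine difficulty arises beyond what is already resolved by Theorem~\ref{th t-free ppg}.
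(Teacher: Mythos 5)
Your proposal is correct and matches the paper's route: the paper deduces Theorem~\ref{th dom} directly from the classification in Theorem~\ref{th t-free ppg}, leaving exactly the routine verifications you describe (small dominating sets for the infinite families, a finite check of the seven exceptional graphs). The one shortcut worth noting is that the paper obtains part (a) without invoking the classification at all: Euler's formula forces $\delta(G)\leq 3$ for any triangle-free projective-planar graph, and in a diameter-$2$ graph the neighbourhood of a minimum-degree vertex is already a dominating set of size at most $3$.
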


As Theorem~\ref{th dom} follows directly from 
Theorem~\ref{th t-free ppg}, we will focus on proving Theorem~\ref{th t-free ppg}.
This is done in Section~\ref{sec proof}. In Section~\ref{sec implications}, we provide some direct implications of our results in determining the absolute clique number of the families of triangle-free projective-planar graphs, which is an important parameter in the theory of homomorphisms of 
colored mixed graphs\footnote{The related definitions are deferred to Section~\ref{sec implications}.}. 

\section{Proof of Theorem~\ref{th t-free ppg}}\label{sec proof}
It is known, due to Euler's formula~\cite{mohar2001} for projective-planar graphs, that any triangle-free 
projective-planar graph 
$G$ has minimum degree $\delta(G) \leq 3$. Therefore, any triangle-free projective-planar graph having a diameter $2$ has a domination number at most $3$.

Notice that as the family of projective-planar graphs is minor-closed, due to The Graph Minor Theorem~\cite{seymour2004}, there exists a finite set $\mathcal{S}$ of graphs such that 
a graph $G$ is projective-planar if and only if $G$ does not contain a minor from $\mathcal{S}$. 
Actually, an explicit description of the set $\mathcal{S}$ is provided in~\cite{archdeacon1981} (see \cite{mohar2001}) and it contains $35$ graphs. 
However, we will not need the full list for our proof - to be precise, we will use only three graphs from that list: (1) $K_{3,5}$, (2) $K_{4,4}^-$, i.e., the graph obtained from $K_{4,4}$ by deleting exactly one edge, and (3) the graph $F_0$ depicted in Figure~\ref{fig:family-F}.  

\begin{figure}
	\centering
	\begin{tabular}{cccc} 
\begin{tikzpicture}[scale=0.45]
	\def\A{0,2}
    \def\B{2,0}
    \def\C{0,-2}
    \def\D{-2,0}
    \def\E{0,1}
    \def\F{1,0}
    \def\G{0,-1}
    \def\H{-1,0}
    \def\I{0,0}
 
    \renewcommand{\vertexset}{(a,\A,black),(b,\B,black),(c,\C,black),(d,\D,black),(e,\E,black),(f,\F,black),(g,\G,black),(h,\H,black),(i,\I,black)};
   \renewcommand{\edgeset}{(a,b),(b,c), (c,d),(a,f),(f,c),(a,h),(h,c),(d,e),(e,b),(d,g),(g,b),(h,i),(i,f),(e,i),(e,g)}
    \renewcommand{\defradius}{.1}
	\drawgraph

\end{tikzpicture}&
\begin{tikzpicture}[scale=0.45]
	\def\A{0,2}
    \def\B{2,0}
    \def\C{0,-2}
    \def\D{-2,0}
    \def\E{0,1}
    \def\F{1,0}
    \def\G{0,-1}
    \def\H{-1,0}
    \def\I{0,0}
 
    \renewcommand{\vertexset}{(a,\A,black),(b,\B,black),(c,\C,black),(d,\D,black),(e,\E,black),(f,\F,black),(g,\G,black),(h,\H,black),(i,\I,black)};
   \renewcommand{\edgeset}{(a,b),(b,c), (c,d),(d,a),(a,f),(f,c),(a,h),(h,c),(d,e),(e,b),(d,g),(g,b),(h,i),(i,f),(e,i),(e,g)}
    \renewcommand{\defradius}{.1}
	\drawgraph

\end{tikzpicture}&
\begin{tikzpicture}[scale=0.45]
	\def\A{0,2}
    \def\B{2,0}
    \def\C{0,-2}
    \def\D{-2,0}
    \def\E{0,1}
    \def\F{1,0}
    \def\G{0,-1}
    \def\H{-1,0}
    \def\I{0,0}
    \def\J{1,-1}
 
    \renewcommand{\vertexset}{(a,\A,black),(b,\B,black),(c,\C,black),(d,\D,black),(e,\E,black),(f,\F,black),(g,\G,black),(h,\H,black),(i,\I,black),(j,\J,black)};
   \renewcommand{\edgeset}{(a,b),(b,j),(j,c), (c,d),(d,a),(a,f),(f,c),(a,h),(h,c),(d,e),(e,b),(d,g),(g,j),(h,i),(i,f),(e,i),(e,g)}
    \renewcommand{\defradius}{.1}
	\drawgraph

\end{tikzpicture}&
\begin{tikzpicture}[scale=0.45]
	\def\A{0,2}
    \def\B{2,0}
    \def\C{0,-2}
    \def\D{-2,0}
    \def\E{0,1}
    \def\F{1,0}
    \def\G{0,-1}
    \def\H{-1,0}
    \def\I{0,0}
    \def\J{1,-1}
    \def\K{-1,-1}
 
    \renewcommand{\vertexset}{(a,\A,black),(b,\B,black),(c,\C,black),(d,\D,black),(e,\E,black),(f,\F,black),(g,\G,black),(h,\H,black),(i,\I,black),(j,\J,black),(k,\K,black)};
   \renewcommand{\edgeset}{(a,b),(b,j),(j,c), (c,k),(d,k),(d,a),(a,f),(f,c),(a,h),(h,c),(k,e),(e,b),(d,g),(g,j),(h,i),(i,f),(e,i),(e,g)}
    \renewcommand{\defradius}{.1}
	\drawgraph

\end{tikzpicture}\\
(a) & (b) & (c) & (d) 
    \end{tabular}
    \caption{The graphs (a) $F_0$, (b) $F_1$, (c) $F_2$, and (d) $F_3$ form the graph family $\mathcal{D}'$.}\label{fig:family-F}
\end{figure}
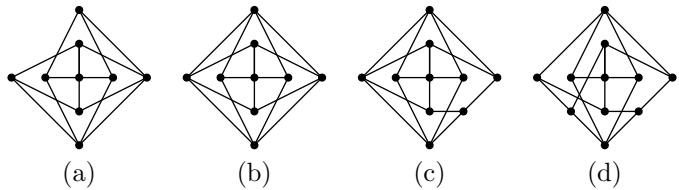

\begin{observation}[\cite{archdeacon1981}; see \cite{mohar2001}]
The graphs $K_{3,5}$, $K_{4,4}^-$, and $F_0$ are not projective-planar. 
\end{observation}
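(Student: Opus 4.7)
The plan is to handle the three obstructions in order of difficulty, using Euler's formula for the projective plane whenever it applies. That formula reads $n - m + f = 1$, and combining it with the face-girth inequality $2m \geq g\cdot f$ gives the bound $m \leq g(n-1)/(g-2)$. In particular, every simple bipartite graph embeddable in the projective plane satisfies $m \leq 2n-2$.

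For $K_{3,5}$ and $K_{4,4}^{-}$, both bipartite on $n=8$ vertices with $m=15$ edges, this bipartite bound $m \leq 14$ is already violated, so neither graph admits a projective-planar embedding. This is essentially a one-line calculation in each case.

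For $F_0$ the above argument fails, because $F_0$ contains the triangles $\{b,e,g\}$ and $\{d,e,g\}$ (sharing the edge $eg$), so it has girth $3$ and the relevant Euler bound only gives $m \leq 3n-3 = 24$, which is far above the actual $m=15$. I would therefore fall back on the Archdeacon--Glover--Huneke--Mohar classification of the thirty-five minor-minimal obstructions for the projective plane, as tabulated in~\cite{mohar2001}, where $F_0$ appears explicitly on the list. An alternative self-contained route would be a rotation-system and edge-signature case analysis: pin down an embedding of the $K_{3,3}$-subdivision that one can locate using the two degree-$4$ vertices $b,c$ together with $e,f,g,h$, and then check that inserting the remaining two vertices $a,i$ with their degree-$3$ attachments forces an edge crossing regardless of which face and which signature one chooses.

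The first two cases are entirely routine Euler-formula calculations; the real obstacle is $F_0$, where either one commits to a lengthy rotation-system enumeration or one cites the published obstruction list. Since the observation is invoked only as a black box in the sequel, the reference to~\cite{mohar2001} seems to be the intended path, and that is the route I would take in the written proof.
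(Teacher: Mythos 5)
Your proposal is correct, and it is in fact more detailed than what the paper provides: the paper offers no proof at all, stating the observation with a pointer to the list of $35$ minor-minimal obstructions in \cite{mohar2001}. Your Euler-formula argument for the two bipartite graphs is sound --- for a connected graph of girth at least $4$ embedded in the projective plane one gets $m \leq 2n-2$, and both $K_{3,5}$ and $K_{4,4}^-$ have $n=8$ and $m=15>14$ --- so for these two cases you have replaced a black-box citation with a genuinely self-contained, elementary verification. You are also right that this route collapses for $F_0$: it does contain the triangles $\{b,e,g\}$ and $\{d,e,g\}$, so the girth-$3$ bound $m\leq 3(n-1)=24$ is useless against $m=15$, and falling back on the published obstruction list is exactly what the paper itself does. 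One small inaccuracy in your sketched (and not pursued) alternative: $F_0$ has three vertices of degree $4$, namely $b$, $c$ and $e$, not two, so a rotation-system analysis anchored on ``the two degree-$4$ vertices $b,c$'' would need to be set up slightly differently; since you explicitly choose the citation as your actual written route, this does not affect the correctness of the proposal.
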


Thus any graph containing $K_{3,5}$, $K_{4,4}^-$, or $F_0$
as a minor is not projective-planar as well. Even though the previous statement is obvious, we will present it as another observation as it will be frequently used in our proofs. 

\begin{observation}[\cite{archdeacon1981}; see \cite{mohar2001}]\label{obs key}
If $G$ contains  $K_{3,5}$, $K_{4,4}^-$, or $F_0$ as a minor, then $G$ is not a projective-planar graph. 
\end{observation}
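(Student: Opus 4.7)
The plan is to invoke the minor-closedness of projective-planarity, which the authors themselves rely on a few lines earlier (when they cite the Graph Minor Theorem to deduce the existence of a finite forbidden-minor list). Once that principle is granted, the statement is an immediate contrapositive: the preceding observation (citing~\cite{mohar2001}) gives that each of $K_{3,5}$, $K_{4,4}^-$, and $F_0$ fails to be projective-planar, so any graph $G$ having one of them as a minor must also fail to be projective-planar. The observation is really just a restatement of the preceding one packaged in a form convenient for use in later proofs.

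To make the minor-closedness concrete, one checks that each of the three minor operations preserves the existence of an embedding in the projective plane. Starting from a projective-planar embedding of $G$, deleting an edge amounts to erasing its arc; deleting a vertex amounts to erasing the vertex together with all its incident arcs; and contracting an edge $uv$ is realised locally by shrinking its arc to a point and merging the rotation systems at $u$ and $v$, which can be carried out without introducing any crossings. Consequently, every minor of a projective-planar graph is projective-planar, and the contrapositive is precisely the statement to be proved.

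The main obstacle in this argument is essentially non-existent: the content lies entirely in the previous observation, which asserts that the three specific graphs $K_{3,5}$, $K_{4,4}^-$, and $F_0$ are not projective-planar, and that fact is cited from~\cite{mohar2001}. What remains is a single line invoking minor-closedness, so the proof reduces to at most two or three sentences.
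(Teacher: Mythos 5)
Your proposal is correct and matches the paper's treatment: the paper also derives this observation as an immediate consequence of the minor-closedness of projective-planarity together with the preceding observation that $K_{3,5}$, $K_{4,4}^-$ and $F_0$ are not projective-planar, explicitly calling the statement ``obvious'' and recording it only for convenient later reference. Your extra sketch of why minors preserve embeddability is a harmless elaboration of the same argument.
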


Now we get into the more technical part of our proof. 
First of all, for convenience, let us denote the family of all 
triangle-free projective-planar graphs having diameter $2$ by 
$\mathcal{PP}_2$. Therefore, what we are trying to do here is to 
provide a list of all graphs in $\mathcal{PP}_2$. We already know that if $G \in \mathcal{PP}_2$, then its
minimum degree $\delta(G)$ is at most $3$. We will use this as the basis of our case analysis. Observe that any $G \in \mathcal{PP}_2$ is connected. Therefore, the logical first step is to handle the graphs having a degree one vertex.

\subsection{Characterizing graphs in $\mathcal{PP}_2$ having minimum degree at most $2$}
\begin{lemma}\label{lm:min-deg-1}
If $\delta(G) = 1$ for a graph $G \in \mathcal{PP}_2$, 
then $G$ is isomorphic to $K_{1,n}$ for some $n \geq 2$. 
\end{lemma}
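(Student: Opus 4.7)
The plan is to exploit the degree-one vertex together with the diameter-two and triangle-free conditions directly; projective-planarity plays essentially no role here, since stars are already planar. Let $v$ be a vertex of $G$ with $\deg(v) = 1$, and let $u$ denote its unique neighbor.

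First I would use the diameter assumption. Every vertex $w \in V(G) \setminus \{v\}$ must satisfy $d(v,w) \le 2$; but any walk of length at most $2$ starting at $v$ is forced to use the edge $vu$ as its first step, so the only vertices within distance $2$ of $v$ are $u$ itself and the neighbors of $u$. Consequently every vertex of $G$ other than $v$ must be adjacent to $u$, which means $u$ is a universal vertex with $\deg(u) = |V(G)| - 1$.

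Next I would invoke the triangle-free hypothesis. If two neighbors of $u$ were adjacent to each other, they would together with $u$ span a triangle, contradicting triangle-freeness. Hence the neighborhood $N(u)$ is an independent set, and together with the fact that every vertex $\neq u$ lies in $N(u)$, this shows that $G$ is isomorphic to the star $K_{1,n}$ where $n = |V(G)| - 1$.

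Finally I would justify the inequality $n \ge 2$: since $G$ has diameter exactly $2$, it must contain two vertices at distance $2$, which forces $|V(G)| \ge 3$ and hence $n \ge 2$. There is no real obstacle in this lemma — the whole argument is just a forced-structure calculation from the degree-one vertex. The only thing to be slightly careful about is the diameter step, where one must observe that the sole length-two paths from $v$ are of the form $v\,u\,w$, so that the universality of $u$ is immediate.
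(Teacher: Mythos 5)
Your proof is correct and follows essentially the same route as the paper's: use diameter $2$ to make the unique neighbor of the degree-one vertex universal, then use triangle-freeness to see its neighborhood is independent. Your extra remark that $n \ge 2$ follows from the diameter being exactly $2$ is a small detail the paper leaves implicit, but the argument is the same.
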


\begin{proof}
Let $v$ be a degree-$1$ vertex in $G$ having $v_1$ as its only neighbor. As $G$ has diameter $2$, $v_1$ must be adjacent to all the vertices in $V(G) \setminus \{v, v_1\}$. Moreover, as $G$ is triangle-free, the set of all neighbors $N(v_1)$ of $v_1$ 
is an independent set. 
\end{proof}

The next natural step is to consider the graphs having minimum degree equal to $2$.

\begin{lemma}\label{lm:min-deg-2}
If $\delta(G) = 2$ for a graph $G \in \mathcal{PP}_2$, 
then $G$ is isomorphic to $K_{2,n+2}$, $C_5(m,n)$, $K_{3,4}(n)$, or 
$K_{3,3}(n)$  for some $m,n \geq 0$. 
\end{lemma}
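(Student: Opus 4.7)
My plan is to fix a degree-$2$ vertex $v$ with neighbours $u, w$ and exploit the fact that $\{u, w\}$ must dominate $V(G) \setminus \{v\}$. Triangle-freeness gives $uw \notin E(G)$, so the vertices outside $\{u, v, w\}$ partition into independent sets $A = N(u) \setminus (N(w) \cup \{v\})$, $B = N(w) \setminus (N(u) \cup \{v\})$ and $C = (N(u) \cap N(w)) \setminus \{v\}$. This setup already resolves two easy subcases. If $A = B = \emptyset$, then $G$ is bipartite with parts $\{u, w\}$ and $\{v\} \cup C$ and all edges present, so $G \cong K_{2, n+2}$ with $n = |C| - 1 \geq 0$, the inequality following from $\delta(G) \geq 2$. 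If exactly one of $A, B$ is empty, say $B = \emptyset$ and $a \in A$, then $a$ needs a neighbour in $N(w) = \{v\} \cup C$ to reach $w$ within distance $2$; since $a \not\sim v$, this neighbour lies in $C$, creating a triangle with $u$, which is impossible.

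I may therefore assume $A, B \neq \emptyset$. Any edge between $A$ and $C$ (or between $B$ and $C$) would form a triangle with $u$ (or $w$), so no such edges exist, and hence $N(a) \subseteq \{u\} \cup B$ and $N(b) \subseteq \{w\} \cup A$ for all $a \in A, b \in B$. A non-adjacent pair $(a, b)$ would then satisfy $N(a) \cap N(b) = \emptyset$, violating diameter $2$, so $A$ and $B$ induce a complete bipartite join. Writing $p = |A|, q = |B|, r = |C|$, I next appeal to Observation~\ref{obs key} to restrict $(p, q)$: if $p, q \geq 3$, the induced subgraph on $\{u, b_1, b_2, b_3\} \cup \{w, a_1, a_2, a_3\}$ is isomorphic to $K_{4, 4}^-$ with missing edge $uw$; and if $\max(p, q) \geq 4$ while $\min(p, q) \geq 2$, contracting the edge $uv$ merges $u$ and $v$ into a single vertex adjacent to $w$ and to every $a_i$, exhibiting a $K_{3, 5}$ minor. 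Both configurations are forbidden in projective-planar graphs, forcing either $\min(p, q) = 1$ or $(p, q) \in \{(2, 2), (2, 3), (3, 2)\}$.

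Finally, I match each surviving subcase to a named family. When $\min(p, q) = 1$, WLOG $A = \{a\}$, so relabelling the $5$-cycle $u$-$v$-$w$-$b_1$-$a$-$u$ as Plesn\'ik's $C_5 = v_1 v_2 v_3 v_4 v_5$ via $v_1 = w, v_2 = v, v_3 = u, v_4 = a, v_5 = b_1$ turns the $r$ vertices of $C$ (each adjacent to $v_1$ and $v_3$) into type-$m$ added vertices and the $q - 1$ extra vertices of $B$ (each adjacent to $v_1$ and $v_4$) into type-$n$ added vertices, yielding $G \cong C_5(r, q - 1)$. When $(p, q) = (2, 2)$, the induced subgraph on $\{u, w\} \cup A \cup B$ is $K_{3, 3}$ on the bipartition $\{u, b_1, b_2\} \cup \{w, a_1, a_2\}$ with the edge $uw$ removed, while the $r + 1$ internally-disjoint length-$2$ paths $u$-$v$-$w$ and $u$-$c_k$-$w$ exhibit $G$ as $K_{3, 3}(r + 1)$; the analogous analysis for $(p, q) \in \{(2, 3), (3, 2)\}$ yields $K_{3, 4}(r + 1)$. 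I expect the main obstacle to be the bookkeeping in the last case---locating the correct forbidden minor in each borderline configuration of $(p, q)$ and verifying that the graph assembled from the case analysis matches the subdivided-parallel-edges description of $K_{3, 3}(n)$ and $K_{3, 4}(n)$.
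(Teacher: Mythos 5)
Your proof is correct and follows essentially the same route as the paper's: the same decomposition around a degree-$2$ vertex into the common neighbourhood $C$ and the private neighbourhoods $A,B$ (the paper's $S_1,S_2$), the same forced complete bipartite join between $A$ and $B$, the same two forbidden minors ($K_{4,4}^-$ when $|A|,|B|\ge 3$ and $K_{3,5}$ via contracting the edge $uv$ when $\min\ge 2$, $\max\ge 4$), and the same identification of the surviving cases with $K_{2,n+2}$, $C_5(m,n)$, $K_{3,3}(n)$ and $K_{3,4}(n)$. The only differences are cosmetic (you rule out the ``exactly one of $A,B$ empty'' case by a triangle rather than a distance count, and you spell out the final isomorphisms in more detail than the paper does).
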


\begin{proof}
Let $v$ be a degree-$2$ vertex having  $N(v)=\{v_1,v_2\}$. Let 
$$C = (N(v_1) \cap N(v_2)) \setminus \{v\} \text{ and }
S_i = N(v_i) \setminus (C \cup \{v\})$$
for $i \in \{1,2\}$. 



%
%
%
%


$S_1\cup S_2$ induces a complete bipartite graph (else the end vertices of any non-edge would be at a distance greater than $2$, a contradiction) and as $\delta(G)\geq 2$, both sets are nonempty. 
 
If $|S_1| \geq |S_2| \geq 3$, then we  find a $K^{-}_{4,4}$ by taking the graph induced by $S_1 \cup S_2 \cup \{v_1, v_2\}$. This is a contradiction due to 
Observation~\ref{obs key}. Thus we must have $|S_2| \leq 2$. 

If $|S_2| = 2$, then $|S_1| \leq 3$ as otherwise we can contract the edge $vv_1$ to find a $K_{3,5}$ induced by $S_1 \cup S_2 \cup \{v_1, v_2\}$. This is a contradiction due to 
Observation~\ref{obs key}.

Now observe that $|S_1| = 3$ and $|S_2| = 2$ implies $G$ is isomorphic to $K_{3,4}(n)$, where $n = |C|+1$. Similarly, 
$|S_1| = 2$ and $|S_2| = 2$ implies $G$ is isomorphic to $K_{3,3}(n)$,  where $n = |C|+1$.

If $|S_2|=1$, $|S_1|$ can have any value greater than or equal to $1$. In this case, $G$ is isomorphic to $C_5(m,n)$, where 
$m = |C|$ and $n= |S_1|-1$. 
\end{proof}

\subsection{Characterizing graphs in $\mathcal{PP}_2$ that are $3$-regular}
This leaves us with the final case: considering 
 the graphs having minimum degree equal to $3$. We break this case into two parts, namely, when $G$ is $3$-regular and when $G$ is not $3$-regular, and tackle them separately. Also we will use some new terminologies. 
 
 A vertex $u$ \textit{reaches} a vertex $v$ if they are adjacent or they have a common neighbor. In particular, if $w$ is a common neighbor of $u$ and $v$, then we use the term $u$ reaches $v$  \textit{via} $w$. 

\begin{lemma}\label{lm:3-regular}
If a $3$-regular graph $G \in \mathcal{PP}_2$, 
then $G$ is isomorphic to either $K_{3,3}$ or  $W_8$ or $P_{10}$.
\end{lemma}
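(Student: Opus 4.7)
The plan is to case-split on $n := |V(G)|$. Pick any vertex $v$ and write $N(v) = \{v_1, v_2, v_3\}$; since $G$ is triangle-free, $N(v)$ is independent, and since $G$ is $3$-regular each $v_i$ has exactly two neighbours other than $v$. Setting $S_i = N(v_i) \setminus \{v\}$, every vertex of $V(G) \setminus (\{v\} \cup N(v))$ must reach $v$ via some $v_i$, so it lies in $S_1 \cup S_2 \cup S_3$, and hence $n \leq 4 + |S_1| + |S_2| + |S_3| = 10$. The handshake lemma forces $n$ even, and $n=4$ would give $K_4$ (which has triangles), so the remaining cases are $n \in \{6,8,10\}$. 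The case $n = 6$ is immediate: the two outer vertices must both be adjacent to all of $v_1,v_2,v_3$, since the sum $\sum_i |S_i| = 6$ is forced onto exactly two vertices of maximum degree $3$, yielding $G \cong K_{3,3}$.

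For $n = 8$, there are four outer vertices whose degrees toward $N(v)$ sum to $6$ and are each at least $1$, so the sequence of these degrees is either $(3,1,1,1)$ or $(2,2,1,1)$. In the first case, the outer vertex $a$ of outer-degree $3$ is adjacent to all $v_i$ and thereby saturates its total degree, forcing each of the remaining three outer vertices $b,c,d$ (of outer-degree $1$) to find two further neighbours inside $\{b,c,d\}$, which closes a triangle. In the second case, writing the two outer-degree-$2$ vertices as $a,b$, I would show that $a$ and $b$ cannot share both of their $N(v)$-neighbours (otherwise the remaining two outer vertices $c,d$ would each require both $a,b$ as outer neighbours, pushing $a,b$ to degree $4$), so $a$ and $b$ share exactly one $N(v)$-neighbour. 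After this, triangle-freeness and $3$-regularity pin down every edge uniquely, and exhibiting a Hamiltonian $8$-cycle with four antipodal chords identifies $G$ with the Wagner graph $W_8$.

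For $n = 10$, every outer vertex has exactly one neighbour in $N(v)$ by pigeonhole, so the outer vertices induce a $2$-regular triangle-free graph, i.e.\ a single $6$-cycle $C$. The sets $S_1,S_2,S_3$ partition $V(C)$ into three pairs, and each pair must be non-adjacent in $C$ (else a triangle forms through the corresponding $v_i$). If all three pairs are antipodal in $C$, a direct girth check identifies $G$ with the Petersen graph $P_{10}$. Otherwise some pair is at $C$-distance $2$, and up to relabelling the partition constraints then force the unique configuration $S_1 = \{c_1,c_3\},\ S_2 = \{c_2,c_5\},\ S_3 = \{c_4,c_6\}$, in which $v_1$ and $c_5$ have no common neighbour, contradicting diameter $2$. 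The main obstacle is the $n=8$ case: the forced-edge bookkeeping and the explicit identification with $W_8$ require the most care; notably, beyond the bound $\delta(G) \leq 3$, the argument does not seem to invoke projective-planarity at all.
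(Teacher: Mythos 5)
Your proof is correct, and the underlying engine is the same as the paper's: fix a vertex $v$, classify the vertices at distance $2$ from $v$ by how many neighbours they have in $N(v)$, and let $3$-regularity, triangle-freeness and diameter $2$ force the structure. The difference is purely in how the cases are indexed. The paper works directly with $|S_3|$ (vertices seeing all of $N(v)$) and then $|S_2|$, and must argue separately that the configurations $|S_3|=|S_2|=1$, $|S_2|=3$ and $|S_2|=1$ are impossible, each time by chasing forced edges until a triangle or a degree violation appears. You instead case-split on $n=|V(G)|$ and invoke the handshake lemma; since those three configurations correspond to $n=7$, $7$ and $9$ respectively, the parity of $3n/2$ kills them for free, and you are left with exactly the paper's surviving cases: $(3,3)$ giving $K_{3,3}$, $(3,1,1,1)$ impossible, $(2,2,1,1)$ giving $W_8$, and $(1,\dots,1)$ giving $P_{10}$. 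Your residual arguments in those cases match the paper's (e.g.\ the $(3,1,1,1)$ triangle on the three degree-$1$ outer vertices is the paper's $|S_3|=1$, $S_2=\emptyset$ argument, and your $6$-cycle pairing analysis for $n=10$ is a cleaner packaging of the paper's $|S_2|=0$ case, with the non-antipodal pairing correctly excluded by a diameter violation rather than a triangle). Your closing remark is also accurate and worth noting: neither your proof nor the paper's uses projective-planarity anywhere in this lemma, so the statement really classifies all cubic triangle-free graphs of diameter $2$; the only place the hypothesis enters is in guaranteeing $\delta(G)\le 3$ elsewhere in the paper.
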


\begin{proof}
Let $v \in V(G)$ be any vertex having neighbors 
$\{v_1, v_2, v_3\}$. Moreover, let $S_i$ denote the set of vertices in $G \setminus \{v\}$ which are adjacent to exactly $i$ vertices among $\{v_1, v_2, v_3\}$. 
Note that, as $G$ has diameter $2$, every vertex in 
$G \setminus \{v, v_1, v_2, v_3\}$ belongs to exactly one of $S_1, S_2,$ and $S_3$. 

\medskip 

Observe that as $G$ is $3$-regular, we must have $|S_3| \leq 2$. Moreover, if $|S_3| = 2$, then $G$ is isomorphic to $K_{3,3}$.

\medskip

If $|S_3| = 1$, then note that we must have $|S_2| \leq 1$ as otherwise, it will force one of the vertices among $\{v_1, v_2, v_3\}$ to have at least two neighbors in $S_2$, and hence have at least four neighbors in $G$, contradicting the $3$-regularity of $G$.

Furthermore,  if $|S_3| = |S_2| = 1$, then without loss of generality we may assume that $S_2 = \{u\} \subseteq N(v_1) \cap N(v_2)$. Thus $u$ must reach $v_3$ via some vertex $u' \in S_1 \cap N(v_3)$. 
Now each vertex among $\{v_1, v_2, v_3\}$ already has three neighbors and thus there cannot be any other vertex in $G$. Also all vertices except $u'$ have degree $3$ at present. Thus the $3$-regularity of $G$ forces us to include a new vertex adjacent to $u'$ in the graph, a contradiction. Therefore, $G$ cannot have $|S_3|=|S_2|=1$. 

Thus if $|S_3| = 1$, then we must have $S_2 = \emptyset$. However, due to the $3$-regularity of $G$, each vertex among $\{v_1, v_2, v_3\}$ has exactly one neighbor in $S_1$. Let us assume that the neighbors of $v_1, v_2,$ and $v_3$
in $S_1$ are $w_1, w_2,$ and $ w_3$, respectively. Now, as 
each vertex among $\{v_1, v_2, v_3\}$ already has three neighbors, there cannot be any other vertex in $G$. 
In fact, each vertex of $G$ other than $w_1, w_2,$ and $ w_3$ has degree $3$ already. Thus $w_1, w_2,$ and $ w_3$ must reach all of $v_1, v_2,$ and $ v_3$ either directly or via themselves. That forces $w_1, w_2,$ and $ w_3$ to create a triangle, contradicting the triangle-free property of $G$. Therefore, $G$ cannot have 
$|S_3| = 1$. 

\medskip

Now let us consider the situation where $|S_3| = 0$. 
First observe that $|S_2| \leq 3$ in this case, as otherwise one vertex among $\{v_1,v_2, v_3\}$ has degree at least $4$.

If $|S_2| = 3$, then without loss of generality, we may assume that $S_2 = \{u_{1}, u_{2}, u_{3}\}$ where 
$u_i \in S_2 \setminus N(v_i)$ for all $i \in \{1,2,3\}$. 
Now, as 
every vertex among $\{v_1, v_2, v_3\}$ already has three neighbors each, there cannot be any other vertex in $G$. 
Hence $S_1 = \emptyset$. Thus, in particular, the vertex $u_1$ must reach $v_1$ via some vertex of $S_2$. That will create a triangle, a contradiction. So $|S_2| \leq 2$.

If $|S_2| = 2$ with $S_2 = \{u_1, u_2\}$, then without loss of generality, assume that $v_3$ is a common neighbor of $u_1$ and $u_2$. 
Moreover, the sum of the degrees of $v_1, v_2,$ and $ v_3$ at the moment is $7$ and therefore we must have exactly two more vertices in $G$. Thus, $|S_1| = 2$ and we may assume that $S_1 = \{w_1, w_2\}$. 
Observe that $u_1$ and $ u_2$ must have 
exactly one neighbor each in $S_1$ as they cannot be adjacent to each other in order to avoid creating a triangle. This implies that there are exactly two edges between  the sets $S_1$ and $S_2$. Thus at least one vertex of $S_1$ does not have degree $3$ unless $w_1$ and $w_2$ are adjacents. Hence $w_1$ and $w_2$ must be adjacent. This implies that $w_1$ and $w_2$ do not have a common neighbor. Therefore, without loss of generality, we may assume that $w_i$ and $ u_i$ are adjacent to $v_i$ for all $i \in \{1,2\}$. Hence the edges $u_1w_2$ and $u_2w_1$ are in $G$. Observe that $G$ is isomorphic to $W_8$ in this case.


We have a total of nine vertices, not possible for a 3-regular graph (as nine is an odd number).

 If $|S_2| = 0$, then each $v_i$ has exactly two neighbors $w_i$
 and $w'_i$ in $S_1$ for all $i \in \{1,2,3\}$. 
 Without loss of generality, $w_1$ reaches $v_2$ and $v_3$ via $w_2$ and $w_3$, respectively. As $w_1$ already has three neighbors, 
 $w'_2$ and $w'_3$ must reach $v_1$ via $w'_1$. 
 Note that, $w_2$ cannot reach $v_3$ via $w_3$ in order to avoid a triangle. Therefore, $w_2$ reaches $v_3$ via $w'_3$. Similarly, $w_3$ reaches $v_2$ via $w'_2$. This implies that $G$ is isomorphic to $P_{10}$. 
\end{proof}

\subsection{Characterizing not regular graphs in $\mathcal{PP}_2$ having minimum degree $3$}
Finally, the case where $\delta(G)=3$ and $\Delta(G) \geq 4$ is handled. The proof of Lemma~\ref{lm:max-deg-4} is lengthy; in order to make the proof easier to follow, we have divided it into several claims and lemmas and presented it in a separate subsection. 

\begin{lemma}\label{lm:max-deg-4}
If $\delta(G)=3$ and $\Delta(G) \geq 4$ for a graph 
$G \in \mathcal{PP}_2$, 
then $G$ is isomorphic to $K_{3,4}$, $K_{3,4}^+$, $W^+_8$, $M_{11}$,
$M_{11}^{-}$,  or $M_{11}^{=}$.
\end{lemma}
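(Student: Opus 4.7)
The plan is to mimic the degree-based bookkeeping used in Lemma~\ref{lm:3-regular}, but anchored at a vertex $v$ of \emph{maximum} degree $d = \Delta(G) \geq 4$. Writing $N(v) = \{v_1, \ldots, v_d\}$, I would partition $V(G) \setminus (\{v\} \cup N(v))$ into sets
\[
S_j = \{u : |N(u) \cap N(v)| = j\},
\]
which are exhaustive because diameter two forces $j \geq 1$. Triangle-freeness makes $N(v)$ independent and forbids edges inside each $N(v_i) \setminus \{v\}$; the hypotheses $\delta(G) \geq 3$ and $\Delta(G) = d$ yield the edge count $\sum_{j} j|S_j| \leq d(d-1)$ and force every $u \in S_1$ to have at least two further neighbors outside $N(v) \cup \{v\}$.

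The first step is to rule out $d \geq 6$ using Observation~\ref{obs key}: degree bookkeeping forces enough vertices into $S_{\geq 2}$ that either three members of $S_{\geq 3}$ produce a $K_{3,5}$ minor (after contracting $v$ with a suitable $v_i$), or two $S_2$-vertices together with $N(v)$ yield a $K_{4,4}^-$ or $F_0$ minor. The second step handles $d = 5$: the same three forbidden minors force $S_{\geq 3} = \emptyset$, and then the degree constraints pin down $|S_2| = 5$ with each $v_i$ adjacent to exactly two vertices of $S_2$. The bipartite graph between $N(v)$ and $S_2$ is $(2,2)$-biregular, and $\delta(G) \geq 3$ forces each $S_2$-vertex to acquire a third neighbor, which must lie inside $S_2$ by triangle-freeness; enumerating these extra edges up to symmetry leaves exactly $M_{11}$, $M_{11}^-$, $M_{11}^=$, with every other candidate eliminated by a short $F_0$-minor argument.

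The third and most delicate step is $d = 4$. I would branch on $|S_3|$: if $|S_3| = 2$, the degree count and Observation~\ref{obs key} quickly pin $G$ down to $K_{3,4}$ or $K_{3,4}^+$; if $|S_3| = 1$, a slightly longer analysis, again using triangle-freeness to force a specific bipartite connection pattern between $N(v)$ and $S_2 \cup S_1$, yields $W_8^+$; the remaining possibility $|S_3| = 0$ has to be ruled out entirely. This last sub-case is the main obstacle. Several $(|S_1|, |S_2|)$-combinations survive the crude edge bound together with triangle-freeness, and eliminating them requires exhibiting a $K_{4,4}^-$ or $F_0$ minor built by carefully chosen contractions that cross the three layers $N(v)$, $S_2$, and $S_1$ in a coordinated way. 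Spotting the correct minor in each surviving branch, without letting the case tree blow up, is what I expect to occupy the bulk of the argument.
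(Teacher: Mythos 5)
Your overall strategy---anchor at a maximum-degree vertex $v$, stratify the remaining vertices by $|N(u)\cap N(v)|$, and kill bad configurations with the forbidden minors $K_{3,5}$, $K_{4,4}^-$, $F_0$ via Observation~\ref{obs key}---is the same as the paper's. The one structural difference is that the paper never cases on $\Delta(G)$: it fixes four neighbours $v_1,\dots,v_4$ of $v$, collects any further neighbours into a set $X$, defines $S_i$ relative to $\{v_1,\dots,v_4\}$ only, and proves the key inequality $|S_4|+|S_3|+m_2\le 2$ (where $m_2$ is a maximum matching in $S_2$) once for all $\Delta\ge 4$; the bound $\Delta\le 5$ then falls out of showing $X=\emptyset$ or $|X|\le 1$ branch by branch. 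Your explicit split into $d\ge 6$, $d=5$, $d=4$ is a legitimate alternative organization, but as sketched it would need its own proof that $d\ge 6$ is impossible, which the paper gets for free.

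The genuine gap is in where you place the target graphs, and it is fatal to the plan as written. In $M_{11}^-$ and $M_{11}^=$ every vertex has degree at most $4$ (e.g.\ in $M_{11}^-$ the hub is adjacent to only four rim vertices), so both graphs live in your $d=4$ branch with $S_4=S_3=\emptyset$ --- precisely the sub-case you declare ``has to be ruled out entirely.'' Only $M_{11}$ itself has a degree-$5$ vertex, so your $d=5$ step cannot ``leave exactly $M_{11}$, $M_{11}^-$, $M_{11}^=$''; it can only produce $M_{11}$. Consequently, executing your outline faithfully would either fail (the $|S_3|=|S_4|=0$ branch cannot be emptied, since two of the six output graphs sit inside it) or prove a false statement. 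The assignment of the other outputs is also scrambled: with $S_i$ meaning ``adjacent to exactly $i$ of the four chosen neighbours,'' the paper gets $K_{3,4}$ from $|S_4|=2$ (so again $|S_3|=0$), $W_8^+$ from $|S_3|=2$, and the graph $K_{3,4}^*$ of Figure~\ref{fig:family-D}(vii) from $|S_4|=1$, whereas the cases $|S_4|=|S_3|=1$ and $|S_3|=1$ are the ones shown impossible. Before any of the minor-hunting you anticipate, you need to redo this bookkeeping so that each of the six graphs actually survives in some branch; the paper's treatment of the $|S_4|+|S_3|=0$ case (Claims on $|S_2|=4$ and $|S_2|=3$, which is where the Gr\"otzsch-type graphs emerge) is the part your outline is missing entirely.
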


We will begin by presenting some basic conventions to be used throughout this section. 

\medskip

\subsubsection{Conventions used in the proof of Lemma~\ref{lm:max-deg-4}}
Let $v\in V(G)$ be a vertex with maximum degree and let 
$N(v)=\{v_1, v_2, v_3, v_4\} \cup X$ where $X$ may or may not be $\emptyset$.
Moreover, let $S_i$ denote the set of vertices in $G \setminus \{v\}$ which are adjacent to exactly $i$ vertices among $\{v_1, v_2, v_3, v_4\}$ (see Figure~\ref{fig:conventions}). Furthermore, let $m_2$ be the cardinality of a maximum matching
in $S_2$.

\subsubsection{Basic structural properties}
The proof of Lemma~\ref{lm:max-deg-4} runs via a series of claims and lemmas. In the case of the claims, we always assume that $\delta(G)=3$ and $d(v)=\Delta(G)\geq 4$ for a graph $G \in \mathcal{PP}_2$.

\begin{figure}[h]
\centering
\includegraphics[width=0.2\textwidth]{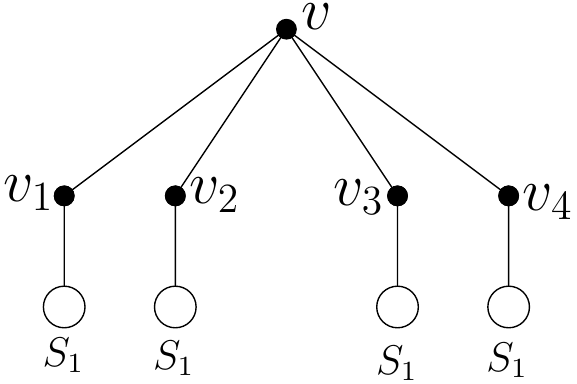}\quad
\includegraphics[width=0.2\textwidth]{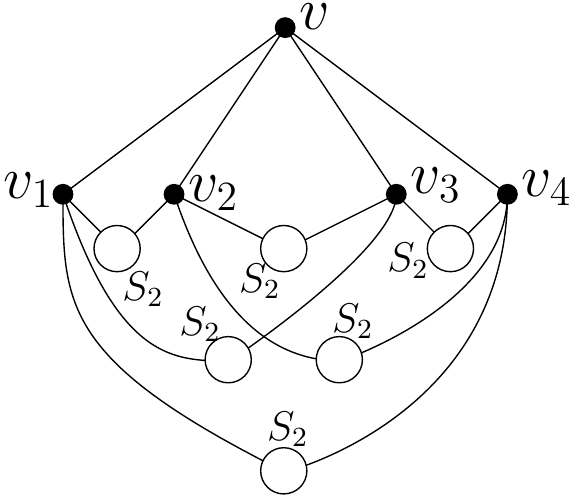}\quad
\includegraphics[width=0.2\textwidth]{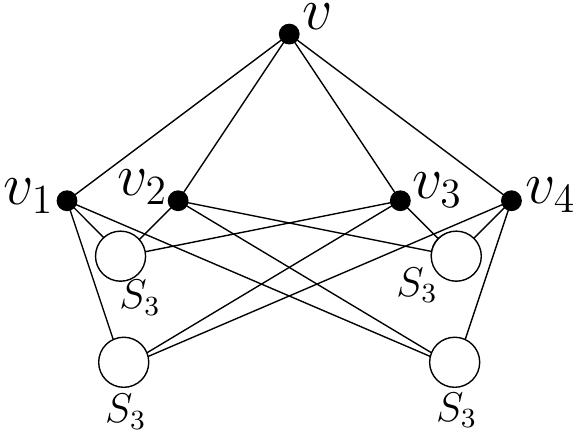}\quad
\includegraphics[width=0.2\textwidth]{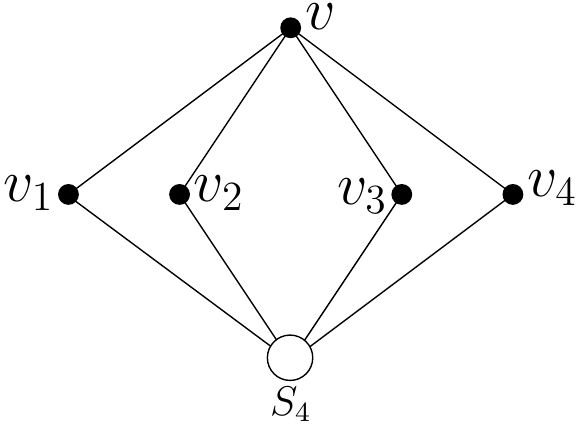}
\caption{Illustrations of our convention}
\label{fig:conventions}
\end{figure}

\medskip

This brings us to our first two observations. 

\begin{claim}\label{claim S4uS3 indep}
	A vertex in $S_3\cup S_4$ is not adjacent to a vertex in $S_2\cup S_3\cup S_4.$
\end{claim}

\begin{proof}
Any vertex in $S_4 \cup S_3$ has at least one neighbor in common with any vertex in $S_2\cup S_3\cup S_4$. Hence, any edge between $S_4 \cup S_3$ and $S_2\cup S_3\cup S_4$ will create a triangle. 
\end{proof}

\medskip

\begin{claim}\label{claim S4+S3+M2 leq 2}
	The value of $|S_4|+|S_3|+m_2$ is at most $2$.
\end{claim}

\begin{proof}
Observe that each vertex of $S_3$ reaches three of the four vertices among $\{v_1,v_2,v_3,v_4\}$
directly and one of them via a vertex of $S_1$ (by Claim~\ref{claim S4uS3 indep}). Therefore, if we contract all the edges between $\{v_1,v_2,v_3,v_4\}$ and $S_1$, then each vertex of $S_3$ becomes adjacent to every vertex of $\{v_1,v_2,v_3,v_4\}$. Moreover, suppose there is an edge having both its end vertices in $S_2$, then these end vertices cannot have a common neighbor, else a triangle is induced. Contracting this edge, the new resulting vertex will correspond to a vertex adjacent to each of $\{v_1,v_2,v_3,v_4\}$. Therefore, if   
$|S_4|+|S_3|+m_2 \geq 3$, then $G$ will contain a $K_{4,4}$-minor. 
\end{proof}

\medskip

Therefore, in particular, the above claim implies that $|S_4|+|S_3| \leq 2$. This bound will be the basis of our case study.

\subsubsection{\textit{Case: }$|S_4|+|S_3| = 2$}

\smallskip
\begin{claim}\label{claim S=2 S2=0}
	If  $|S_4|+|S_3| = 2$, then $S_2 = \emptyset$.
\end{claim}

\begin{proof}
Recall that each vertex of $S_3$ reaches three of the four vertices among $\{v_1,v_2,v_3,v_4\}$
directly and one of them via a vertex of $S_1$ (by Claim~\ref{claim S4uS3 indep}). 
If  $|S_4|+|S_3| = 2$, then $m_2=0$ (by Claim~\ref{claim S4+S3+M2 leq 2}). Now if $S_2 \neq \emptyset$, then each vertex of $S_2$ reaches two of the four vertices among $\{v_1,v_2,v_3,v_4\}$
directly and two of them via vertices of $S_1$. This is implied by Claim~\ref{claim S4uS3 indep} and the following: any vertex in $S_2$ cannot reach the two non-adjacent vertices in  $\{v_1,v_2,v_3,v_4\}$ via a vertex in $S_2$ (as $m_2=0$). 
Hence, contract all edges between $S_1$ and 
$\{v_1, v_2, v_3, v_4\}$ to obtain a $K_{4,4}$, a contradiction. 
\end{proof}
 
\medskip

\begin{claim}\label{claim S=2 S1 restricted}
	If  $S_4 \cup S_3 = \{u_1, u_2\}$ and $v_i$ is a common neighbor of $u_1$ and $u_2$ for some $i \in \{1,2,3,4\}$, then no vertex  $w \in S_1$ is adjacent to $v_i$. 
\end{claim}

\begin{proof}
By Claim~\ref{claim S=2 S2=0}, we know that $S_2=\emptyset$. Therefore, each vertex in $S_4\cup S_3$ would reach non-adjacent vertices in $\{v_1,v_2,v_3,v_4\}$ via some vertices in $S_1$. 
If there exists a vertex $w \in S_1 \cap N(v_i)$, where $v_i$ is as defined in the lemma statement, then $w$ cannot be adjacent to any of $\{u_1, u_2\}$ (else a triangle is induced). Since $S_2=\emptyset$ (by Claim~\ref{claim S=2 S2=0}), the vertex $w$ reaches vertices in $\{v_1, v_2, v_3, v_4\}\setminus \{v_i\}$ via vertices in $S_1$. Contract all the edges between $S_1$ and $\{v_1, v_2, v_3, v_4\}$ except $wv_i$ to obtain a $K_{4,4}$, a contradiction. 
\end{proof}

\medskip

\begin{claim}\label{claim S=2 X=0}
	If  $|S_4|+|S_3| = 2$, then $X = \emptyset$.
\end{claim}
\begin{proof}
Assume that $S_4 \cup S_3 = \{u_1, u_2\}$ and there exists an $x \in X$. 

If $|S_4| = 2$, then $x$ reaches $u_1$ and $u_2$ directly or via some other vertices not adjacent to $\{v_1, v_2, v_3, v_4\}$ (else a triangle is induced). Contract the edges between $x$ and those vertices (if they exist) via which $x$ reaches $u_1, u_2$ to obtain a $K_{3,5}$, a contradiction.

If $|S_4| = 1$ and without loss of generality 
$u_1 \in S_4$ and $u_2 \in S_3$, then $x$ reaches $u_1$ directly or via some other vertex $w_1$ not adjacent to $\{v_1, v_2, v_3, v_4\}$ (else a triangle is induced). Contract the edge  $xw_1$ 
if it exists. Observe that $u_2$ is adjacent to some vertex $w_2 \in S_1$ in order to reach all of $\{v_1, v_2, v_3, v_4\}$. Contract $u_2w_2$. Note that $x$ reaches $u_2$ directly, or via $w_2$ or via some other vertex $w_3$ which is not adjacent to $\{v_1, v_2, v_3, v_4\}$. 
Contract $xw_3$, if it exists, to obtain  a $K_{3,5}$, a contradiction.

If $|S_4| = 0$ and thus $u_1 , u_2 \in S_3$, then 
$u_1$ and $u_2$  may be non-adjacent to the same or different vertices in $\{v_1,v_2,v_3,v_4\}$. 

\begin{itemize}
    \item[Case 1.] If they are non-adjacent to different vertices of $\{v_1,v_2,v_3,v_4\}$, then without loss of generality assume that $u_i$ is not adjacent to $v_i$ for $i \in \{1,2\}$. In this case, $u_i$
must reach $v_i$ via some $w_i \in S_1$ (by Claim~\ref{claim S4uS3 indep}) and $x$ must reach $u_i$ directly or via $w_i$ or via some $w'_i\in S_1$ with $w'_i \not\in \{w_1, w_2\}$. Contract the edges $u_iw_i$ and $xw'_i$ (if they exist) for all $i \in \{1,2\}$ to obtain  a $K_{3,5}$, a contradiction.

\item[Case 2.]  If they are non-adjacent to the same vertex, without loss of generality say, $v_4$ of $\{v_1,v_2,v_3,v_4\}$, then $v_4$ must have at least two neighbors $w_1, w_2$ as the degree of $v_4$ is at least three. Also $w_1, w_2\in S_1$ (by Claim~\ref{claim S4uS3 indep}). Due to Claims~\ref{claim S=2 S2=0} and~\ref{claim S=2 S1 restricted}, we know that the only way for $w_i$ to reach $v_1$ is via $u_1$ or $u_2$. Moreover, for each $i \in \{1,2\}$, 
$u_i$ must reach $v_4$ via some vertex of $S_1$ (by Claim~\ref{claim S4uS3 indep}). Therefore, we must have a perfect matching between 
$\{u_1, u_2\}$ and $\{w_1, w_2\}$. Without loss of generality, assume that perfect matching be 
$\{u_1w_1, u_2w_2\}$. Furthermore, observe that if $x$ is adjacent to any one of $u_1$ or $u_2$, we can rename the vertices of $N(v)$ to reduce this to a case where $|S_4| \geq 1$, which we have already handled earlier in the proof of this lemma. Therefore, $x$ must reach $u_i$ via $w_i$ or via some vertex $w'_i \not\in \{u_1, u_2, w_1, w_2\}$ for all $i \in \{1,2\}$.  Contract the edges $u_iw_i$ and $xw'_i$ (if they exist) for all $i \in \{1,2\}$ to obtain  a $K_{3,5}$, a contradiction.   
\end{itemize}

This concludes the proof.
\end{proof}

\medskip

Now that we have shown the set $X = \emptyset$ when $|S_4|+|S_3| = 2$, we can try to characterize the graphs for this case.

\medskip

\begin{lemma}\label{claim S4=2 implies K34}
	If $\delta(G)=3$ and $d(v)=\Delta(G)\geq 4$ for a graph $G \in \mathcal{PP}_2$, then the following holds: if  $|S_4| = 2$, then $G$ is isomorphic to $K_{3,4}$.
\end{lemma}

\begin{proof}
If  $|S_4| = 2$, then $X = S_3 = S_2 = S_1 = \emptyset$
due to Claims~\ref{claim S4+S3+M2 leq 2}, \ref{claim S=2 S2=0}, \ref{claim S=2 S1 restricted} and~\ref{claim S=2 X=0}. The only vertices in the graphs other 
than $\{v, v_1, v_2, v_3, v_4\}$ are the 
two vertices in $S_4$. Thus $G$ is isomorphic to $K_{3,4}$.
\end{proof}

\begin{claim}\label{claim S4=S3=1 implies nothing}
	It is not possible to have $|S_4| = |S_3| = 1$. 
\end{claim}

\begin{proof}
If  $|S_4| = |S_3| = 1$, then without loss of generality assume that $u_1 \in S_4$, $u_2$ $\in S_3\setminus N(v_4)$. 
Observe that $X = S_3 = S_2 = \emptyset$
due to Claims~\ref{claim S4+S3+M2 leq 2}, \ref{claim S=2 S2=0} and~\ref{claim S=2 X=0}. 
Moreover, all the  vertices in $S_1$ are adjacent to $v_4$ due to Claim~\ref{claim S=2 S1 restricted}. As $u_2$ must reach $v_4$ via some vertex of $S_1$, the set $S_1$ is not empty. However, any vertex of $S_1$ has exactly two neighbors, namely $v_4$ and $u_2$, contradicting $\delta(G) = 3$.
\end{proof}

\begin{lemma}\label{claim S3=2 implies W8+}
	If $\delta(G)=3$ and $d(v)=\Delta(G)\geq 4$ for a graph $G \in \mathcal{PP}_2$, then the following holds: if $|S_3| = 2$, then $G$ is isomorphic to $W^+_8$. 
\end{lemma}

\begin{proof}
Assume that $S_3 = \{u_1, u_2\}$. Thus 
$X = S_4 = S_2 =  \emptyset$
due to Claims~\ref{claim S=2 X=0}, \ref{claim S4+S3+M2 leq 2}
and~\ref{claim S=2 S2=0}.  Observe that it is enough to consider the following two cases: (i) $u_1$ and $u_2$ are both non-adjacent to $v_4$, and (ii) $u_i$ is non-adjacent to $v_i$ for $i \in \{1,2\}$. 

\begin{itemize}
    \item[Case (i).] If $u_1$ and $u_2$ are both non-adjacent to $v_4$, 
    then all the  vertices in $S_1$ are adjacent to $v_4$ due to Claim~\ref{claim S=2 S1 restricted}. The vertices $v, v_1, v_2, v_3, v_4, u_1,$ and $u_2$ and the vertices belonging to $S_1$ are all of the vertices of $G$. As $\delta(G) = 3$, each vertex of $S_1$ must be adjacent to both $u_1$ and $u_2$. Furthermore, $S_1$ must have at least two vertices, say $w_1$ and $w_2$, as $\delta(G) = 3$. 
    Now contract $vv_4$ to obtain  a $K_{3,5}$ induced by 
    $\{u_1, u_2, (vv_4)\} \sqcup \{v_1, v_2, v_3, w_1, w_2\}$ where $(vv_4)$ denotes the new vertex obtained by contracting the edge $vv_4$. Thus this case is not possible since $K_{3,5}$ is not projective planar.  
    
    \item[Case (ii).] If $u_i$ is non-adjacent to $v_i$ for $i \in \{1,2\}$, 
    then all the  vertices in $S_1$ are adjacent to either $v_1$ or $v_2$ due to Claim~\ref{claim S=2 S1 restricted}. Therefore, 
    the vertices $v, v_1, v_2, v_3, v_4, u_1,$ and $u_2$ and the vertices belonging to $S_1$ are all of the vertices of $G$. 
    Suppose that 
    $S_1=\{w_1, w_2, \ldots, w_k, w'_1, w'_2, \ldots, w'_r\}$ where $w_i$s are adjacent to $v_1$ and $w'_j$s are adjacent to $v_2$ where 
    $(i,j) \in \{1,2, \ldots, k\} \times \{1,2, \ldots, r\}$. 
    Each $w_i$ reaches $v_3$ via $u_1$ and each $w'_j$ is reaches $v_3$ via $u_2$ for all 
    $(i,j) \in \{1,2, \ldots, k\} \times \{1,2, \ldots, r\}$.
    Furthermore as $\delta(G) = 3$, each $w_i$ must be adjacent to some $w'_j$ and    each $w'_p$ must be adjacent to some $w_q$. 
    Without loss of generality and due to symmetry, we may assume that 
    $1 \leq r \leq k$.  
        If $k \geq 2$, then 
    contract the edges $u_2w'_j$ for all $j \in \{1,2,\ldots, r\}$ 
    to obtain the new vertex $(u_2w'_j)$, 
    and contract the edge $vv_1$ to obtain the new vertex $(vv_1)$. 
     Observe that,  in this contracted graph, the  vertices 
     $\{u_1, (vv_1), (u_2w'_j)\} \sqcup \{w_1, w_2, v_2, v_3, v_4\}$ induce a $K_{3,5}$ subgraph, a contradiction. 
    Therefore, $k = r = 1$. Thus  $G$ is isomorphic to $W_8^+$. 
\end{itemize}

This ends the proof of the lemma.
\end{proof}

This concludes the case when we have $|S_4|+|S_3|=2$. We will present the summary of it in the following lemma.

\begin{lemma}\label{lem case s3+s4=2}
	If $\delta(G)=3$ and $d(v)=\Delta(G)\geq 4$ for a graph $G \in \mathcal{PP}_2$, then the following holds: if ${|S_4|+|S_3|=2}$, then $G$ is isomorphic to $K_{3,4}$ or $W_8^{+}$. 
\end{lemma}

\begin{proof}
Follows directly from Lemma~\ref{claim S4=2 implies K34}, Claim~\ref{claim S4=S3=1 implies nothing}, and Lemma~\ref{claim S3=2 implies W8+}. 
\end{proof}

\medskip

It remains to analyse the situations when $|S_4|+|S_3| \leq 1$. 
The first case is when $|S_4|+|S_3| = 1$.

\medskip
\subsubsection{\textit{Case:} $|S_4|+|S_3| = 1$}
\smallskip

\begin{claim}\label{claim S4+S3=1 implies S2neq0}
	If $|S_4|+|S_3| = 1$, then $S_2 \neq \emptyset$.
\end{claim}

\begin{proof}
Let us assume the contrary and suppose that 
$S_2 = \emptyset$. 
Furthermore, assume that $S_4 \cup S_3 = \{u_1\}$ and that $u_1$ is adjacent to $\{v_1, v_2, v_3\}$. 
First we will show that it is not possible for 
any $v_i$, for $i \in \{1,2,3\}$, to have 
two neighbors in $S_1$.

Hence without loss of generality assume that  $v_1$ is 
adjacent to  $w_1, w_2  \in S_1$. Observe that 
both $w_1$ and $w_2$ must reach $v_2, v_3,$ and $v_4$ via some vertices from $S_1 \setminus \{w_1, w_2\}$ and $u_1$ must reach $v_4$ via some vertex from 
$S_1 \setminus \{w_1, w_2\}$ (all due to Claim~\ref{claim S4uS3 indep}). Now contract all the edges between $\{v_2, v_3, v_4\}$ and 
$S_1 \setminus \{w_1, w_2\}$ to obtain a $K_{4,4}$-minor, a contradiction. 
Thus each $v_i$s, for $i \in \{1,2,3\}$, 
can have at most one neighbor in $S_1$. 

However,  $\delta(G) = 3$ implies that each $v_i$ must be adjacent to exactly one vertex (say) $w_i$ from $S_1$ for $i \in \{1,2,3\}$. Now note that $w_1$ must reach $v_2$ and $v_3$ via $w_2$ and $w_3$, respectively and $w_2$ must reach $v_3$ via $w_3$. 
This creates a triangle induced by $\{w_1, w_2, w_3\}$ in $G$, a contradiction. 
\end{proof}

\medskip

\begin{claim}\label{claim S4=1 implies X=0}
	If $|S_4| = 1$ and $|S_3| = 0$, then $X = \emptyset$.
\end{claim}

\begin{proof}
Let $S_4 = \{u_1\}$ and let $x \in X$. Claim~\ref{claim S4+S3=1 implies S2neq0} implies the existence of a vertex $w_1 \in S_2$. Without loss of generality
assume that $w_1$ is adjacent to $v_1$ and $v_2$. 

Note that $x$ reaches $u_1$ directly or via some vertex $a$ (say) not adjacent to any of $\{v_1, v_2, v_3, v_4\}$ 
in order to avoid creating a triangle in $G$. 
Moreover, $w_1$ reaches $x, v_3,$ and $v_4$ via some vertices not adjacent to any of $\{v_1,v_2\}$. Let $A$ denote the set of vertices via which 
$w_1$ reaches $x, v_3,$ and $v_4$. Contract the edges between $A\setminus \{a\}$ and $w_1$ and the edge $xa$. The vertices $\{v,w_1,u_1\}\sqcup \{x, v_1,v_2,v_3,v_4\}$ form the partition of a $K_{3,5}$-minor, a contradiction.
\end{proof}

\medskip

\begin{claim}\label{claim S4=1 implies m2=1}
	If $|S_4| = 1$ and $|S_3| = 0$, then $m_2 = 1$.
\end{claim}

\begin{proof}
By Claim~\ref{claim S4+S3+M2 leq 2}, $m_2 \leq 1$.
Next, suppose that $S_4 = \{u_1\}$ and $m_2 = 0$. 
That means every vertex of $S_2$ (which is non-empty by Claim~\ref{claim S4+S3=1 implies S2neq0}) must reach its non-adjacent $v_i$s via vertices of $S_1$ (by Claim~\ref{claim S4uS3 indep} and $m_2=0$). Therefore, if $|S_2| \geq 2$, then contracting the edges between $S_1$ and $\{v_1, v_2, v_3, v_4\}$ will create a $K_{4,4}$-minor. 

Thus, we have $|S_2|=1$. Without loss of generality assume that $S_2 = \{w\}$ and that $w$ is adjacent to $v_1$ and $v_2$. 

Notice that $w$ must reach $v_3$ and $v_4$ via $w_3, w_4 \in S_1$, respectively (by Claim~\ref{claim S4uS3 indep} and $m_2=0$). Thus to avoid creating a triangle, $w_3$ must reach $v_4$ via $w_4' \in S_1$. Moreover, to avoid creating a triangle, $w_4'$ must reach $v_1$ via some $w_1 \in S_1$. 
    
Notice that, $w_1$ reaches $v_2, v_3,$ and $v_4$ via some elements of $S_1$. Thus, if we contract all the edges between $S_1$ and $\{v_2, v_3, v_4\}$, we will create a $K_{4,4}$-minor.

Therefore, $m_2 \geq 1$. Since $m_2 \leq 1$ by Claim~\ref{claim S4+S3+M2 leq 2}, we have $m_2 = 1$. 
\end{proof}

\medskip

\begin{lemma}\label{claim S4=1 S2>0}
	If $\delta(G)=3$ and $\Delta(G)\geq 4$ for a graph $G \in \mathcal{PP}_2$, then the following holds: if $|S_4| = 1$ and $|S_3| = 0$, then $G$ is isomorphic to $K_{3,4}^{*}$. 
\end{lemma}

\begin{proof}
Let $S_4 = \{u_1\}$ and, thus, by Claim~\ref{claim S4=1 implies m2=1}
we know that $m_2 = 1$. 
Then without loss of generality we may assume the existence of an edge $w_1w_2$ such that $w_1, w_2 \in S_2$, 
$w_1$ is adjacent to $v_1$ and $v_2$, and 
$w_2$ is adjacent to $v_3$ and $v_4$. 
If there are no other vertex or edge in $G$, 
then $G$ is isomorphic to $K_{3,4}^{*}$. 

However, if there is another vertex $w_3 \in S_2$ and 
if $w_3$ reaches $\{v_1, v_2, v_3, v_4\}$ directly or 
via some vertices except $w_1$ and $w_2$, then contract the edge 
$w_1w_2$. Also contract the edges between $w_3$ and the vertices via which $w_3$ reaches $v_i$s, for $i \in \{1,2,3,4\}$. This will result in a $K_{4,4}$-minor. 

On the other hand, 
if there is a $w_3 \in S_2$ and 
if $w_3$ reaches $\{v_1, v_2, v_3, v_4\}$ directly or 
via  $w_i$ for some $i \in \{1,2\}$, then $G$ is
the graph $F_1$ (depicted in Fig.~\ref{fig:family-F}(b))
that contains the graph $F_0$ (depicted in Fig.~\ref{fig:family-F}(a)) as a subgraph, and thus as a minor. 
\end{proof}

So far we were dealing with the case when $G$ is a graph with $|S_4|=1$ and $|S_3|=0$. Now we turn our attention towards the case when $G$ is a graph with $|S_4|=0$ and $|S_3|=1$.
Initially, we will observe some properties that this condition implies. However, finally, the satisfaction of those properties will turn out to be impossible, thereby proving that there are no required graphs with $|S_4|=0$ and $|S_3|=1$.

\begin{claim}\label{claim S3=1 X=0}
	If $|S_4| = 0$ and $|S_3| = 1$, then $X= \emptyset$. 
\end{claim}
\begin{proof}
Suppose that $X \neq \emptyset$ and $x \in X$. 
Let $S_3 = \{u_1\}$ and without loss of generality 
let $u_1$ be adjacent to $v_1, v_2, v_3$. Therefore, $u_1$ must 
reach $v_4$ via $w_4 \in S_1$ (by Claim~\ref{claim S4uS3 indep}). 
We know that $S_2 \neq \emptyset$ due to 
Claim~\ref{claim S4+S3=1 implies S2neq0}. Also let $u_2 \in S_2$.

If $u_2$ reaches $v_4$ directly or via any vertex other than $w_4$, 
then contract the edge $u_1w_4$, 
all the edges connecting $u_2$ to its neighbors via which $u_2$ reaches $v_1, v_2, v_3$ or $v_4$, and 
all the edges connecting $x$ to its neighbors via which $x$ reaches $u_1$ and $u_2$,
in order to obtain a $K_{3,5}$-minor (see the vertices in the partition $\{v,u_1,u_2\}\sqcup \{x,v_1,v_2,v_3,v_4\}$).

Thus $u_2$ reaches $v_4$ via $w_4$ and nothing else. 
Hence without loss of generality, we may assume that $u_2$ is adjacent 
to $v_1$ and $v_2$ and that $u_2$ reaches $v_3$ via $w_3 \in S_1$ (if $w_3\in S_2$, then $w_3$ must be adjacent to $v_3,v_4$ and by similar arguments as the previous case existence $K_{3,5}$ minor can be shown). 
Moreover, as $\delta(G) = 3$, there must be a $w'_4 \in S_1$ adjacent to $v_4$. On the other hand, $v_1$ cannot have a neighbor in $S_1$, as otherwise we may contract all the edges between $S_1$ and $\{v_2, v_3, v_4\}$ to obtain a $K_{4,4}$-minor. Therefore, $w'_4$ must reach $v_1$ via $u_1$ (if $w'_4$ reaches $v_1$ via $u_2$, then a $K_{3,5}$-minor is formed: see the vertices in the partition $\{v,u_1,u_2\}\sqcup \{x,v_1,v_2,v_3,v_4\}$). 
Now contract the edges $u_1w'_4, u_2w_3, u_2w_4$, and 
all the edges connecting $x$ to its neighbors via which 
$x$ reaches $u_1, u_2$. If $x$ is adjacent to $w_4$, then also contract the edge connecting $x$ to its neighbor via which $x$ reaches $w'_4$. This creates a $K_{3,5}$-minor (see the vertices in the partition $\{v,u_1,u_2\}\sqcup \{x,v_1,v_2,v_3,v_4\}$).  
\end{proof}

\medskip

\begin{claim}\label{claim S3=1 m2=1}
	If $|S_4| = 0$ and $|S_3| = 1$, then $m_2= 1$. 
\end{claim}

\begin{proof}
We already know that $m_2= 0$ or $1$ due to Claim~\ref{claim S4+S3+M2 leq 2}. 

If $m_2 = 0$, then $|S_2| \leq 1$ as otherwise we can contract all the edges between $S_1$ and $\{v_1, v_2, v_3, v_4\}$ to obtain a $K_{4,4}$-minor. 
Thus $|S_2|=1$ due to Lemma~\ref{claim S4+S3=1 implies S2neq0}. 
Without loss of generality assume that $S_3 = \{u_1\}$, $S_2 = \{u_2\}$, $u_1$ is adjacent to $v_1, v_2, v_3$ and $u_2$ is adjacent to $v_1$. 
If $v_1$ is adjacent to some vertex of $S_1$, then we can contract all the edges between $S_1$ and $\{v_2, v_3, v_4\}$ and obtain a $K_{4,4}$-minor. Thus $v_1$ cannot have a neighbor in $S_1$. Hence every vertex of $S_1$ must reach $v_1$ via $u_1$ or $u_2$.

If $u_2$ is adjacent to $v_4$ as well, then $u_2$ must reach $v_2$, $v_3$ via some $w_2, w_3 \in S_1$, respectively. Now $w_2$ must reach 
$v_3$ via some vertex $w'_3 \in S_1$. Observe that it is not possible to have $w_3 = w'_3$ as $G$ is triangle-free. Now $w'_3$ must reach $v_1$ via $u_1$ or $u_2$. In any case, this will create a triangle. Therefore, $u_2$ is not adjacent to $v_4$. Thus we may assume without loss of generality that $u_2$ is adjacent to $v_2$.

If $u_2$  is adjacent to $v_2$, 
then $u_2$ must reach $v_3$ via some $w_3 \in S_1$
and $w_3$ must reach $v_4$ via some $w_4 \in S_1$. 
Observe that $u_2$ cannot be adjacent to $w_4$ 
in order to avoid creating a triangle. 
Therefore, $u_2$ must reach $v_4$ via some $w'_4 \in S_1$. 
Note that $w'_4$ must reach $v_3$ via some 
distinct $w'_3 \in S_1$
in order to avoid creating triangle. 

By what we have already noted above in this proof, we know that 
the only way for $w'_3$ to reach $v_1$ 
is via $u_1$ or $u_2$. In each case a triangle will be created. 
\end{proof}

\medskip

\begin{claim}\label{claim S3=1 not possible}
If $|S_4| = 0$, then it is not possible to have $|S_3| = 1$.
\end{claim}

\begin{proof}
Suppose the contrary. We already know that $m_2= 1$ due to Claim~\ref{claim S3=1 m2=1}. 
Thus note that without loss of generality we may assume that 
$S_3 = \{u_1\}$, $u_2, u_3 \in S_2$, $u_2u_3 \in E(G)$, 
$u_1$ is adjacent to $\{v_1, v_2, v_3\}$, $u_2$ is adjacent to $\{v_1, v_2\}$,
$u_3$ is adjacent to $\{v_3, v_4\}$ and 
$u_1$ reaches $v_4$ via $w_1 \in S_1$. 

Observe that any  vertex in $S_2 \setminus \{u_2, u_3\}$ will force a $K_{4,4}$-minor or a $F_2$ (depicted in Fig.~\ref{fig:family-F}(c) which contains $F_0$, depicted in Fig.~\ref{fig:family-F}(a), as a minor) as a subgraph of $G$ (this is similar to the second half of the proof of Lemma~\ref{claim S4=1 S2>0}). Thus we may infer that $S_2 = \{u_2, u_3\}$. Note that, according to the partial description of $G$ till now $w_1$ has two neighbors. Due to the minimum degree requirement, it must have another neighbor. If $u_2$ is a neighbor of $w_1$, then note that $v_1, v_2, v_3$, and $w_1$ are neighbors of $u_1$ such that $v$ is adjacent to three of them and $u_2$ is adjacent to three of them. This reduces the case to where $|S_3| = 2$, which is already taken care of. 

Therefore, $w_1$ is adjacent to another vertex $w_2 \in S_1$. Note that, if $w_2$ reaches $v_1, v_2, v_3,$ and $v_4$ directly or via vertices from $S_1$, then contracting the edge $u_2u_3$ and all the edges between $S_1$ and $\{v_1, v_2, v_3, v_4\}$ except for the edge having $w_2$ as an endpoint  creates a $K_{4,4}$-minor. If $w_2$ is adjacent to either of $v_1$ or $v_2$ (without loss of generality assume it is adjacent to $v_1$), then it reaches $v_2$ and $v_3$ via vertices of $A\subseteq S_1$. Now contracting edges $w_1v_4$, $u_2u_3$ and edges between $w_2$ and $A$, we get a $K_{4,4}$-minor. 
Thus, the following situation is forced: $w_2$ is adjacent to $v_3$ and $u_2$. This creates the subgraph $F_2$ (depicted in Fig.~\ref{fig:family-F}(c)) in $G$ which contains $F_0$ 
(depicted in Fig.~\ref{fig:family-F}(a)) as a minor, a contradiction.  
\end{proof}

This concludes the case when we have $|S_4|+|S_3|=1$. We will present the summary of it in the following lemma.

\begin{lemma}\label{lem case s3+s4=1}
	If $\delta(G)=3$ and $d(v)=\Delta(G)\geq 4$ for a graph $G \in \mathcal{PP}_2$, then the following holds: if ${|S_4|+|S_3|=1}$, then $G$ is isomorphic to $K_{3,4}^{*}$ or $W_8^{+}$. 
\end{lemma}

\begin{proof}
Follows directly from Lemma~\ref{claim S4=1 S2>0} and Claim~\ref{claim S3=1 not possible}. 
\end{proof}

\medskip

This brings us to the case where $|S_4|+|S_3| = 0$. 

\medskip

\subsubsection{\textit{Case:} $|S_4|+|S_3| = 0$}

\smallskip

\begin{claim}\label{S4=S3=S2=0 not possible}
	It is not possible to have  $|S_4|=|S_3|=|S_2| = 0$.
\end{claim}

\begin{proof}
As $\delta(G) \geq 3$, each $v_i$ must have at least two neighbors in $S_1$. Thus without loss of generality assume that $v_i$ is adjacent to 
$w_i,w'_i  \in S_1$ for all $i \in \{1,2,3,4\}$. Moreover, without loss of generality, we may suppose that 
$w_1$ reaches $v_i$ via $w_i$ for all $i \in \{2,3,4\}$. Note that 
as $G$ is triangle-free, $\{w_2, w_3, w_4\}$ is an independent set. 
Therefore, contracting the edges between the vertices of 
$\{v_1, v_2, v_3, v_4\}$ 
and the vertices of $(S_1 \setminus \{w_2, w_3, w_4\})$ creates a $K_{4,4}$-minor. 
\end{proof}

Now we will  consider the case when $|S_2| \geq 1$.

\medskip

\begin{claim}\label{S4=S3=0, S2 has 3 with one vi}
	If $|S_4|=|S_3| = 0$, then it is not possible for $v_i$, for all $i\in \{1,2,3,4\}$, to have three or more 
	neighbors in $S_2$. 
\end{claim}

\begin{proof}
Let us assume the contrary. Without loss of generality  suppose 
that $v_1$ is adjacent to $u_1, u_2, u_3 \in S_2$. Furthermore suppose that $u_1$ is adjacent to $v_2$ as well. Observe that 
$\{u_1, u_2, u_3\}$ is an independent set as $G$ is triangle-free. 

If $u_2$ or $u_3$ is also adjacent to $v_2$, 
then by renaming $v_1$ as $v$, the case reduces to $|S_3|+|S_4| \geq 1$ which has been handled before. 

Thus without loss of generality, we may assume that 
$u_2$ is adjacent to $v_3$ and 
$u_3$ is adjacent to $v_4$. 
Then $u_1$ must reach $v_3$ and $v_4$; $u_2$ must reach $v_2$ and $v_4$; and $u_3$ 
must reach $v_2$ and $v_3$, 
via some vertices of $S_1 \cup S_2$. 
If they use vertices from $S_2$, then those vertices must be distinct. Let $A$ be the vertices via which $u_1, u_2,$ and $u_3$
reach $v_2, v_3,$ and $v_4$. Contract the edges between $A \cap S_2$ and $\{u_1,u_2,u_3\}$. Also, contract the edges between $A \cap S_1$
and $\{v_2, v_3, v_4\}$. 
We will obtain a $K_{4,4}$-minor, a contradiction. 

Thus we have considered all the cases up to symmetry and have proved the claim. 
\end{proof}

\medskip 







\begin{claim}\label{S2 have 2 with same parents, not possible}
	If $|S_4|=|S_3| = 0$ , then it is not possible to have $u_1, u_2 \in S_2$ 
	having $N(u_1) \cap \{v_1,v_2,v_3,v_4\} = N(u_2) \cap \{v_1,v_2,v_3,v_4\}$.  
\end{claim}
\begin{proof}
Let us assume the contrary. Without loss of generality  suppose 
that $u_1$ and $u_2$  are adjacent to both $v_1$ and $v_2$.

Note that it is not possible to have any vertex other than $v, u_1$, and $u_2$ adjacent to 
$v_1$ (or  $v_2$) as otherwise our case will get reduced to the case where $|S_3|+|S_4| \geq 1$ by renaming $v_1$ as $v$ which we have already taken care of.

Therefore, every vertex from 
$V(G) \setminus (N[v] \cup \{u_1,u_2\}) = A$ (say)  is adjacent to
either $u_1$ or $u_2$ in order to reach $v_1$ and $v_2$. 
Since $\delta(G)\geq 3$, $v_3$ has two more neighbors. Both these neighbors are adjacent to $u_1$ or $u_2$. 
If any vertex from $A$ is adjacent to both $u_1$ and $u_2$, then one of $u_1$ or $u_2$, without loss of generality assumes $u_1$, has degree 4. Then 
our case will get reduced to the case where $|S_3|+|S_4| \geq 1$ by renaming $u_1$ as $v$ which we have already taken care of.

As diameter of $G$ is $2$, $u_1, u_2$ must reach $N(v) \setminus \{v_1,v_2\}$ via some vertices from $A$. Let $A_i$ be the set of vertices from $A$ via which $u_i$ reaches the vertices of 
$N(v) \setminus \{v_1,v_2\}$ where $i \in \{1,2\}$. 
Due to the observation made in the previous paragraph, we know that the sets $A_1$ and $A_2$ are disjoint.

If $X \neq \emptyset$, then contract the edges between $A_i$ and $\{u_i\}$ for each $i \in \{1,2\}$ to obtain 
a $K_{3,5}$-minor, a contradiction (see the vertices in the partition $\{v,u_1,u_2\}\sqcup \{x,v_1,v_2,v_3,v_4\}$, where $x\in X$). 
Thus we may assume that $X = \emptyset$. 

\medskip

If there exists $u_3 \in S_2$, it must be adjacent to both 
$u_1$ and $u_2$ in order to reach them. This follows from Claim~\ref{S4=S3=0, S2 has 3 with one vi}. But we have already shown that this is not possible. Thus there are no vertices in $S_2$ other than $u_1$ and $u_2$. 

However as $\delta(G) \geq 3$, there are at least two neighbors 
$w_{i1}, w_{i2} \in S_1$ of $v_i$ for $i \in \{3,4\}$. 
Without loss of generality suppose that $w_{31}$ reaches $v_1$ and $v_2$ via $u_1$. 
Therefore, $w_{31}$ have to reach $v_4$ via a vertex of $S_1 \cap N(v_4)$, say $w_{41}$. Now as $G$ is triangle free, $w_{41}$ reaches $v_1$ and $v_2$ via $u_2$. 
If $w_{32}$ is adjacent to $w_{41}$, then a triangle is induced as $w_{32}$ has to be adjacent to $u_1$ or $u_2$ in order to reach $v_1$ and $v_2$. Thus $w_{32}$ is not adjacent to $w_{41}$. 
Next, observe that 
$w_{32}$ must reach $w_{41}$ via $u_2$. 
Finally, $w_{32}$ must reach $v_4$ via some vertex in $S_1 \cap N(v_4)$, say $w_{42}$ and, then  $w_{42}$ must reach $v_1$ and $v_2$ via $u_1$. 

This so-obtained graph is isomorphic to the graph $F_3$ depicted in Fig.~\ref{fig:family-D}. 
\end{proof}

\medskip

\begin{claim}\label{S4=S3=0, S2 has 3 with one vi missing}
	If $|S_4|=|S_3| = 0$, then it is not possible to have three vertices of $S_2$ non-adjacent to $v_i$, for all $i\in\{1,2,3,4\}$. 
\end{claim}

\begin{proof}
Assume the contrary and let $u_1, u_2, u_3 \in S_2$
be non-adjacent to $v_4$. Contract all edges between $S_2 \setminus \{u_1, u_2, u_3\}$ and $\{u_1, u_2, u_3\}$. Also, contract the edges between $S_1$ and 
$\{v_1, v_2, v_3, v_4\}$.  This will create a $K_{4,4}$-minor, a contradiction. 
\end{proof}

\medskip

\begin{claim}\label{claim S2 leq 4}
	If $|S_4|=|S_3| = 0$, then $|S_2| \leq 4$. 
\end{claim}
\begin{proof}
Follows directly from Claims~\ref{S4=S3=0, S2 has 3 with one vi}, 
\ref{S2 have 2 with same parents, not possible} and~\ref{S4=S3=0, S2 has 3 with one vi missing}. 
\end{proof}

\medskip

\begin{lemma}\label{claim S2=4}
	If $\delta(G)=3$ and $\Delta(G)\geq 4$ for a graph $G \in \mathcal{PP}_2$, then the following holds: if ${|S_4|=|S_3| = 0}$ and $|S_2| = 4$, then $G$ is isomorphic to $M^{=}_{11}, M^{-}_{11}$, or $M_{11}$. 
\end{lemma}
    
    \begin{proof}
    Assume that  $S_2 = \{u_1, u_2, u_3, u_4\}$.
    Thus due to Claims~\ref{S4=S3=0, S2 has 3 with one vi}, 
    \ref{S2 have 2 with same parents, not possible} and~\ref{S4=S3=0, S2 has 3 with one vi missing} without loss of we may suppose that 
    $u_i$ is adjacent to $v_i$ and $v_{i+1}$, for all $i \in \{1,2,3,4\}$ and the $+$ operation on the indices is taken modulo $4$. If $S_2$ does not have a perfect matching, then it will force a $K_{4,4}$-minor. Thus we must have the edges $u_1u_3$ and $u_2u_4$. 
    Also $X = \emptyset$, as otherwise there will be a $K_{3,5}$-minor in $G$ (see the partition $\{v,(u_1u_3),(u_2u_4)\}\sqcup \{x,v_1,v_2,v_3,v_4\}$, where $x\in X$). 
    
    \medskip 

    Next, we claim that for all $i\in \{1,2,3,4\}$, $|N(v_i) \cap S_1| \leq 1$. Suppose $|N(v_1)\cap S_1|\geq 2$. Let $w_1,w_2 \in N(v_1)\cap S_1$. Then $w_1$ reaches $v_2$ either via $u_2$ or some vertex in $N(v_2)\cap S_1$, and $w_1$ reaches $v_4$ either via $u_3$ or some vertex in $N(v_4)\cap S_1$. Similarly $w_2$ reaches $v_2$ and $v_4$. Contract the edge $v_2u_2$ and $v_4u_3$ and the edges between $S_1$ and $\{v_2,v_4\}$ to obtain a $K_{3,5}$-minor, a contradiction (see the partition $\{v_1,(v_2u_2),(v_4u_3)\}\sqcup \{w_1,w_2,u_1,u_4,v\}$). Thus $|N(v_1)\cap S_1|\leq 1$. A similar analysis holds for $v_2, v_3$, and $v_4$. Hence, for all $i\in \{1,2,3,4\}$, $|N(v_i) \cap S_1| \leq 1$.

    \medskip

    Next, we claim that $|S_1| \leq 2$. If $|S_1| \geq 3$, then without loss of generality assume that $w_1 \in N(v_1)\cap S_1$. If $w_1$ does not reach $\{v_2,v_3,v_4\}$ via $u_2,u_3$, then it uses vertices from $S_1$, forcing a $K_{4,4}$-minor in $G$. Thus $w_1$ has to use at least one of $u_2,u_3$ to reach $\{v_2,v_3,v_4\}$. Suppose $w_1$ is not adjacent to $u_2$, then it is adjacent to $u_3$. Then $w_1$ reaches $v_2$ via $w_2\in S_1$. Now $w_2$ cannot reach $v_3$ via $u_2$ or $u_3$, else a triangle is induced in $G$. Thus $w_2$ reaches $v_3$ via $w_3\in S_1$. This forms a $K_{4,4}^-$-minor in $G$ (see the partition $\{v,(u_1u_3),(u_2u_4),(w_1w_2w_3)\}\sqcup \{v_1,v_2,v_3,v_4\}$). Next, suppose $w_1$ is not adjacent to $u_3$, then it is adjacent to $u_2$. Then, similarly, a $K_{4,4}^-$-minor is obtained in $G$. Thus $|S_1| \leq 2$. 

    \medskip
    
    Observe that if $|S_1|=0, 1$, or $2$, then $G$ is isomorphic to $M^{=}_{11}, M^{-}_{11}$, or $M_{11}$, respectively. It is easy to observe the cases when $|S_1|=0, 1$. For $|S_1|=2$, without loss of generality we will have two cases: when the two vertices of $S_1$ are adjacent to $v_1$ and $v_2$; and when the two vertices of $S_1$ are adjacent to $v_1$ and $v_3$. In the first case, we get a graph isomorphic to $M_{11}$. In the second case, we get a $K_{4,4}$-minor in $G$. We briefly describe the second case. Let $N(v_1)\cap S_1=\{w_1\}$ and  $N(v_1)\cap S_3=\{w_2\}$. Then $w_1$ reaches $v_2$ via $u_2$, and $v_4$ via $u_3$. And $w_2$ reaches $v_4$ via $u_4$, and $v_2$ via $u_1$. All these edges are forced, or else a triangle is induced in $G$. The only possible way that $w_1$ reaches $w_2$, without inducing a triangle in $G$, is directly by an edge. This forms a $K_{4,4}$-minor in $G$ (see the partition $\{w_1,(vv_3),u_1,u_4)\}\sqcup \{w_2,v_1,(u_3v_4),(u_2v_2)\}$). 
    \end{proof}

    \medskip
    
    \begin{lemma}\label{claim S2=3}
	If $\delta(G)=3$ and $d(v)=\Delta(G)\geq 4$ for a graph $G \in \mathcal{PP}_2$, then the following holds: if ${|S_4|=|S_3| = 0}$ and $|S_2| = 3$, then $G$ is isomorphic to $M^{=}_{11}, M^{-}_{11}$, or $M_{11}$. 
\end{lemma}
    \begin{proof}
    Assume that  $S_2 = \{u_1, u_2, u_3\}$.
    Thus due to Claims~\ref{S4=S3=0, S2 has 3 with one vi}, 
    \ref{S2 have 2 with same parents, not possible}, and~\ref{S4=S3=0, S2 has 3 with one vi missing}, without loss of generality, we may suppose that 
    $u_i$ is adjacent to $v_i$ and $v_{i+1}$, for all $i \in \{1,2,3\}$.
    If $u_1$ is not adjacent to $u_3$, then it will force a $K_{4,4}$-minor. Thus we must have the edge $u_1u_3$. 

    If $N(v_2)\cap S_1 \neq \emptyset$, then let $w_1\in N(v_2)\cap S_1$. Now $w_1$ reaches $v_3$ via $w_2\in S_1$. This forces a $K_{4,4}$-minor (see the partition $\{v,(u_1u_3),u_2,(w_1w_2)\}\sqcup \{v_1,v_2,v_3,v_4\}$). Thus $N(v_2)\cap S_1 = \emptyset$. Similarly, by symmetry, $N(v_3)\cap S_1 = \emptyset$. 
    
    Therefore, every vertex in $S_1$ is adjacent to $u_2$ to reach $v_2$ and $v_3$. Also, every vertex in $N(v_1)\cap S_1$ is adjacent to $u_3$ to reach $v_3$, and every vertex in $N(v_4)\cap S_1$ is adjacent to $u_1$ to reach $v_2$. 

    Next to satisfy $\delta(G)\geq 3$, $N(v_1)\cap S_1$ and $N(v_4)\cap S_1$ have at least one vertex. If $|N(v_1)\cap S_1|\geq 2$, then let $w_1,w_1'\in N(v_1)\cap S_1$ and $w_2\in N(v_4)\cap S_1$. This forces a $K_{3,5}$-minor (see the vertices in the partition $\{(u_1u_3),u_2,(v_1vv_4)\}\sqcup \{w_1,w_1',v_2,v_3,w_2\}$). Thus $N(v_1)\cap S_1=\{w_1\}$ and $N(v_4)\cap S_1=\{w_2\}$. 

    Now let us consider $X$. Suppose $X\neq \emptyset$: let $x\in X$. Both $u_1$ and $u_3$ are not adjacent to $x$, else $|S_3|\geq 1$ which we have dealt earlier. If $u_1$ and $u_3$ do not use $w_2$ and $w_1$, respectively, to reach $x$, then a $K_{3,5}$-minor is forced (see the vertices in the partition $\{(u_1u_3),(w_1u_2w_2),v\}\sqcup \{x,v_1,v_2,v_3,v_4\}$). Hence $x$ is adjacent to $w_1,w_2$. 

    If $|X|\geq 2$, then replacing $v$ by $w_1$ we have $|S_3|\geq 1$ which we have dealt earlier. Hence $|X| \leq 1$. 
    
    Observe that if $|X|=0$ or $1$, then $G$ is $M^{-}_{11}$ or $M_{11}$, respectively. 
    \end{proof}

    
    
    

    \medskip
    
    \begin{lemma}\label{claim S2=2}
	If $\delta(G)=3$ and $d(v)=\Delta(G)\geq 4$ for a graph $G \in \mathcal{PP}_2$, then the following holds: if ${|S_4|=|S_3|=0}$ and $|S_2| = 2$, then $G$ is isomorphic to $K_{3,4}$, $K_{3,4}^*$, or $W_8^{+}$.
\end{lemma}
    
    \begin{proof}
    Assume that  $S_2 = \{u_1, u_2\}$.
    Thus due to Claims~\ref{S4=S3=0, S2 has 3 with one vi}, 
    \ref{S2 have 2 with same parents, not possible} 
    and~\ref{S4=S3=0, S2 has 3 with one vi missing} 
    without loss of generality we may suppose one of the two scenarios: 
    (i) $u_1$ is adjacent to $v_1, v_2$, and $u_2$ is adjacent to $v_2$ and $v_3$,
    (ii) $u_1$ is adjacent to $v_1, v_2$, and $u_2$ is adjacent to $v_3$ and $v_4$.

    \medskip
    
\textit{Case (i):} First suppose $u_1$ is adjacent to $v_1, v_2$, and $u_2$ is adjacent to $v_2$ and $v_3$. 
Observe that $v_2$ cannot be adjacent to any vertex of $S_1$, 
as otherwise a $K_{4,4}$-minor will be created. 
Also $|N(v_4) \cap S_1| \geq 2$ as $\delta(G) \geq 3$. 
Every vertex in $N(v_1)\cap S_1$ reaches $v_2$ via $u_2$, and every vertex in $N(v_3)\cap S_1$ reaches $v_2$ via $u_1$. 

Now $u_1$ reaches $v_4$ via $w_4\in N(v_4)\cap S_1$. $w_4$ cannot reach $v_3$ via any vertex in $N(v_3)\cap S_1$, as a triangle is induced. Thus $w_4$ reaches $v_3$ via $u_2$. 

Now $w_4'\in (N(v_4)\cap S_1)\setminus \{w_4\}$ reaches $v_2$ via $u_1$ or $u_2$. Next, $w_4'$ cannot use $N(v_3)\cap S_1$ or $N(v_1)\cap S_1$ to reach the yet unreached vertex in $\{v_1,v_3\}$, else a triangle is induced. 
Thus $w_4'$ is adjacent to both $u_1$ and $u_2$. Similarly, every vertex in $N(v_4)\cap S_1$ is adjacent to $u_1$ and $u_2$. 

Thus any vertex in $N(v_1)\cap S_1$ cannot reach any vertex in $N(v_3)\cap S_1$ via any vertex of $N(v_4)\cap S_1$. Hence every vertex in $N(v_1)\cap S_1$ is adjacent to every vertex in $N(v_3)\cap S_1$.  
This will reduce the case to the case of $|S_3|+|S_4| \geq 1$ where $u_1$ plays the role of $v$ (note that $u_1$ and $u_2$ share at least three common neighbors).  
    
    \medskip
    
    \textit{Case (ii):}  Next assume that $u_1$ is adjacent to $v_1, v_2$, and $u_2$ is adjacent to $v_3$ and $v_4$. Here $u_1$ is adjacent to $u_2$, as otherwise a $K_{4,4}$-minor will be created. 
    
    Note that $|N(v_i) \cap S_1| \geq 1$ as $\delta(G) \geq 3$. 
    All vertices of $N(v_1)\cap S_1$ and $N(v_2)\cap S_1$ cannot be adjacent to $u_2$ as a vertex in $N(v_1)\cap S_1$ reaches $v_2$ via a vertex in $N(v_2)\cap S_1$. Without loss of generality assume that $w_1\in N(v_1)\cap S_1$ is not adjacent to $u_2$. 
    If $x\in X$, then a $K_{3,5}$-minor will be created (see the partition $\{v, w_1, (u_1u_2)\}\sqcup \{x,v_1,v_2,v_3,v_4\}$). Hence $X=\emptyset$. 

    Next, we claim that $|N(v_i) \cap S_1| \leq 1$. Suppose $w_1,w_1'\in N(v_1)\cap S_1$. Both of them cannot be adjacent to $u_2$, else this case reduces to $|S_3|+|S_4|\geq 1$ where $v_1$ plays the role of $v$ (note that $v_1$ and $u_2$ have at least three neighbors in common). Without loss of generality assume that $w_1\in N(v_1)\cap S_1$ is not adjacent to $u_2$. Thus it reaches $v_2,v_3,v_4$ via vertices in $S_1$. If $w_1'$ also reaches $v_2,v_3,v_4$ via vertices in $S_1$, then a $K_{4,4}$-minor will be created (see the partition $\{v,w_1,w_1',(u_1u_2)\}\sqcup \{v_1,v_2,v_3,v_4\}$). If $w_1'$ reaches $v_3,v_4$ via $u_2$, then let $w_1'$ reach $v_2$ via $w_2$. Now $w_2$ cannot be adjacent to $u_2$, else a triangle is induced. Thus $w_2$ reaches $v_3,v_4$ via vertices in $S_1$. This forces a $K_{4,4}$-minor (see the partition $\{v,w_1,(w_1'w_2),(u_1u_2)\}\sqcup \{v_1,v_2,v_3,v_4\}$). Hence $|N(v_i) \cap S_1| \leq 1$; and since $|N(v_i) \cap S_1| \geq 1$ as $\delta(G) \geq 3$, we have $|N(v_i) \cap S_1| = 1$. 

    Suppose $N(v_i) \cap S_1 = \{w_i\}$, for all $i \in \{1,2,3,4\}$. Then $w_1$ reaches $v_2$ via $w_2$, and $w_3$ reaches $v_4$ via $w_4$. Thus $w_1w_2$ and $w_3w_4$ are edges in $G$. Moreover, due to symmetry, 
    without loss of generality, we may assume the edges $w_2u_2, w_3u_1$ as forced as well.

    This reduces the case to Case (i) of this proof where $u_1$ plays the role of $v$. 
    \end{proof}
    
    \medskip

    \begin{claim}\label{claim S2=1}
	If $|S_4|=|S_3| = 0$, then it is not possible to have $|S_2|=1$. 
\end{claim}
    
    \begin{proof}
    Assume that  $S_2 = \{u_1\}$ and $u_1$ is adjacent to $v_1, v_2$. Note that as $|S_2|=1$, $u_1$ must reach 
    $v_3, v_4$ via (say) $w_{31}, w_{41} \in S_1$, respectively.
    
    Notice that $u_1$ and $v_i$ cannot have two common neighbors from $S_1$, as otherwise the case will be reduced to $|S_2| \geq 2$ where $u_1$ plays the role of $v$. 
    
    Furthermore as $\delta(G) \geq 3$, $v_3$ must have another neighbor $w_{32} \in S_1$. As $w_{32}$ cannot be adjacent to $u_1$, it must reach $v_1, v_2$ via 
    $w_{11}, w_{21} \in S_1$, respectively. Observe that 
    $w_{11}$ is not adjacent to $w_{21}$ in order to avoid 
    creating a triangle. Thus $w_{21}$ must reach $v_1$ via some $w_{12} \in S_1$. 
    
    Now contract all the edges between 
    $S_1$ and $\{v_2, v_3, v_4\}$. This will create a $K_{4,4,}$-minor, a contradiction (see the partition $\{v,u_1,w_{11},w_{12}\}\sqcup \{v_1,v_2,v_3,v_4\}$). 
    \end{proof}
    
    This concludes the case when we have $|S_4|+|S_3|=0$. We will present the summary of it in the following lemma.

\begin{lemma}\label{lem case s3+s4=0}
	If $\delta(G)=3$ and $d(v)=\Delta(G)\geq 4$ for a graph $G \in \mathcal{PP}_2$, then the following holds: if ${|S_4|+|S_3|=0}$, then $G$ is isomorphic to $K_{3,4}$ $K_{3,4}^{*}$, $W_8^{+}$, $M_{11}^{=}$, $M_{11}^{-}$, or $M_{11}$. 
\end{lemma}
    
    \begin{proof}
    Follows directly from Lemmas~\ref{claim S2=4}, \ref{claim S2=3}, and \ref{claim S2=2}, and Claims~\ref{S4=S3=S2=0 not possible} and \ref{claim S2=1}. 
    \end{proof}
    
    \medskip
    
    Finally, we are ready to prove Lemma~\ref{lm:max-deg-4}. 
    \medskip

    \noindent \textit{Proof of Lemma~\ref{lm:max-deg-4}.}
    The result readily follows from Lemmas~\ref{lem case s3+s4=2}, \ref{lem case s3+s4=1}, and~\ref{lem case s3+s4=0}. 
    \qed

    \subsection{Concluding the proof of Theorem~\ref{th plesnik}}
    At last, we can conclude the proof of Theorem~\ref{th t-free ppg}. 
    
    \medskip
    
    \noindent \textit{Proof of Theorem~\ref{th t-free ppg}.}
    The result readily follows from Lemmas~\ref{lm:min-deg-1}, \ref{lm:min-deg-2}, \ref{lm:3-regular}, and~\ref{lm:max-deg-4}.  
    \qed

\section{Direct implications}\label{sec implications}
In Theorem~\ref{th t-free ppg}, we have characterized all triangle-free projective-planar graphs having diameter $2$. 
This has an immediate theoretical implication in the theory of graph homomorphisms of colored mixed graphs, signed graphs, and oriented graphs. We are going to discuss them here.

First let us start with colored mixed graphs which were introduced by Ne\v{s}et\v{r}il and Raspaud~\cite{nesetriljctb}. 
An \textit{$(m,n)$-colored mixed graph} $G$ is a 
graph having 
$m$ different types of arcs and 
$n$ different types of edges. 
Moreover, \textit{colored homomorphism} from
an $(m,n)$-colored mixed graph $G$ to 
another $(m,n)$-colored mixed graph $H$ is 
a vertex mapping $f: V(G) \rightarrow V(H)$ such that 
for any arc (resp., edge) $uv$ of $G$, the induced image
$f(u)f(v)$ is also an arc (resp., edge) of the same type in $H$. 
Observe that for $(m,n) = (0,1), (1,0), (0,2)$,  and $(0,k)$
the study of colored homomorphism of $(m,n)$-colored mixed graphs is the same as studying homomorphisms of 
undirected graphs~\cite{nesetrilbook}, oriented graphs~\cite{ericsurvey}, $2$-edge-colored graphs~\cite{pascalalexsen}, and $k$-edge-colored graphs~\cite{alonmarshall}, respectively. Each of these is a well-studied topic. 

Generalizing the notion of oriented absolute cliques and oriented absolute clique number\footnote{The same is also known as \textit{oriented cliques} or \textit{ocliques} and \textit{oriented clique number} or \textit{oclique number}.}~\cite{klostermeyer}, Bensmail, Duffy, and Sen~\cite{bensmailduffysen} introduced the notion of  $(m, n)$-clique and $(m,n)$-absolute clique number. 
An \textit{$(m, n)$-clique} $C$ is an $(m, n)$-colored mixed graph that does not admit a colored homomorphism to any other $(m,n)$-colored mixed graph having strictly fewer vertices. 
Given a family 
$\mathcal{F}$ of $(m,n)$-colored mixed graphs,
$$\omega_{a(m,n)}(\mathcal{F}) = \max\{|V(G)|:G \in \mathcal{F} \text{ is an } (m,n)\text{-clique}\}.$$

A handy characterization 
of an $(m, n)$-clique is proved by Bensmail, Duffy, and Sen~\cite{bensmailduffysen}. 

\begin{proposition}[\cite{bensmailduffysen}]\label{prop clique1}
An $(m,n)$-colored mixed graph $C$ is an $(m,n)$-clique 
if and only if every pair of non-adjacent vertices $u,w$ of $C$ are connected by a  $2$-path $uvw$ of one of the following types: 
\begin{itemize}
\item[(i)] $uv$ and $vw$ are edges of different colors,

\item[(ii)] $uv$ and $vw$ are arcs (possibly of the same color),

\item[(iii)] $vu$ and $wv$ are arcs (possibly of the same color),

\item[(iv)] $uv$ and $wv$ are arcs of different colors,

\item[(v)] $vu$ and $vw$ are arcs of different colors,

\item[(vi)] exactly one of $uv$ and $vw$ is an edge.
\end{itemize}

\end{proposition}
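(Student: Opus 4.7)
The plan is to base the proof on the following basic observation: a colored homomorphism $f : C \to H$ with $|V(H)| < |V(C)|$ must identify at least two distinct vertices of $C$, so it suffices to characterize when a pair of distinct vertices can be identified. Adjacent vertices can never be identified, because the edge or arc between them would collapse to a loop, which is not permitted in a simple $(m,n)$-colored mixed graph. For non-adjacent $u,w$, the identification forces, for every common neighbor $v$, the two relations $uv$ and $vw$ to merge into a single relation between $f(u)=f(w)$ and $f(v)$; this is possible exactly when those two relations are mutually compatible (both are edges of the same color, or both are arcs in the same direction of the same color).

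For the forward direction (contrapositive), suppose non-adjacent $u,w$ admit no $2$-path of any of the six listed types. Then every common neighbor $v$ falls into one of the compatible configurations above, so the quotient $(m,n)$-colored mixed graph $C' = C/\{u=w\}$ is well-defined and has $|V(C)|-1$ vertices; the quotient map is a colored homomorphism, showing that $C$ is not an $(m,n)$-clique. The six types in the statement are designed to exhaust all obstructions: (i) mismatched edge colors force an edge of two distinct colors; (ii)--(iii) directed $2$-paths collapse to a forbidden $2$-cycle; (iv)--(v) converging or diverging arcs of different colors become two parallel arcs of different colors; (vi) an edge--arc pair cannot coexist as a single relation.

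For the backward direction, assume every non-adjacent pair admits such a $2$-path, and let $f : C \to H$ be any colored homomorphism. Suppose, for contradiction, that $f(u)=f(w)$ for distinct $u,w$. If $u,w$ are adjacent, the image contains a loop, contradicting that $H$ is a simple $(m,n)$-colored mixed graph. Otherwise, the hypothesis produces a $2$-path $uvw$ of one of the six types, and walking through those types shows that $f(v)$ would be forced to have two incompatible relations to $f(u)=f(w)$, again a contradiction. Hence $f$ is injective, so $|V(H)| \ge |V(C)|$ for every target $H$, and $C$ is an $(m,n)$-clique.

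The main obstacle is purely bookkeeping: verifying that the six listed $2$-path types are in bijective correspondence with the possible obstructions to identification, i.e., that the negation of ``some listed $2$-path exists between $u$ and $w$'' is precisely the statement ``for every common neighbor $v$, the relations $uv$ and $vw$ are compatible in the colored mixed sense.'' Once this correspondence is laid out as a small table, both directions follow by direct case checking and no further idea is required.
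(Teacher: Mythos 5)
Your proposal is correct, and there is nothing to compare it against inside this paper: Proposition~\ref{prop clique1} is stated here without proof and imported from Bensmail, Duffy and Sen. Your argument is the standard (and essentially the original) one --- a colored homomorphism onto a strictly smaller target must identify a non-adjacent pair, such an identification is possible precisely when every common neighbour sees compatible relations to the two vertices, and the six listed $2$-path types enumerate exactly the incompatible configurations (mismatched edge colours, directed $2$-paths collapsing to a digon, converging or diverging arcs of different colours, and edge--arc mixtures) --- so no step is missing.
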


A $2$-path in an $(m,n)$-graph is a \textit{special $2$-path} if it is 
one among the six types of path listed in Proposition~\ref{prop clique1}. 
If a $2$-path $uvw$ is a special $2$-path, then we say that \textit{$u$ sees $w$ via $v$} and that $u$ and $w$ \textit{disagrees} on $v$. 
If $uvw$ is not a special $2$-path, then we say that $u$  and $w$ \textit{agrees} on $v$.
Due to the above proposition, we know that any underlying graph of an $(m,n)$-clique must have a diameter of at most $2$. 
Moreover, the underlying graph of an $(m,n)$-clique is called an 
\textit{underlying $(m,n)$-clique}. 

\begin{observation}\label{obs und clique diam2}
	An underlying $(m,n)$-clique has a diameter at most $2$. 
\end{observation}

Theorem~\ref{th t-free ppg} and Proposition~\ref{prop clique1} directly imply the following.

\begin{theorem}
For the family $\mathcal{PP}_2$ of $(m,n)$-colored mixed triangle-free projective-planar graphs 
\begin{enumerate}
    \item[(i)] $\omega_{a(1,0)}(\mathcal{PP}_2)= 9 $
    
    \item[(ii)] $\omega_{a(0,2)}(\mathcal{PP}_2)= 8 $
    
    \item[(iii)] $\omega_{a(m,n)}(\mathcal{PP}_2)= (2m+n)^2 +2, 
                                      \text{ for all } 2m+n \geq 3$.
\end{enumerate}
\end{theorem}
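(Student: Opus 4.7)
The plan is to combine Theorem~\ref{th t-free ppg} with Proposition~\ref{prop clique1} by first translating the proposition into a state-based counting inequality. At any vertex $v$ of an $(m,n)$-colored mixed graph, each edge or arc incident to $v$ lies in exactly one of $2m+n$ possible \emph{states}: one per edge colour, together with an ``incoming'' and an ``outgoing'' state for each of the $m$ arc colours. Inspecting the six $2$-path types in Proposition~\ref{prop clique1} shows that two non-adjacent vertices $u$ and $w$ sharing a common neighbour $v$ admit a valid $2$-path $uvw$ if and only if $s_v(u)\neq s_v(w)$. This yields the following \emph{Twin Lemma}: whenever $u_1,\ldots,u_k$ is an independent set of vertices all with the same neighbourhood $N$ of size $t$ inside an $(m,n)$-clique, their $t$-tuples of states along $N$ must be pairwise distinct, so $k\leq (2m+n)^{t}$.

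For the lower bound in part (iii), I would exhibit $K_{2,(2m+n)^{2}+2}$ as a witness: take the two hubs $x,y$ and index the $(2m+n)^{2}+2$ degree-$2$ vertices by pairs $(s,t)\in[2m+n]^{2}$, setting the states of their incidences with $x$ and $y$ to $s$ and $t$ respectively; all state pairs are distinct, and the two hubs are distinguished through any degree-$2$ vertex. For parts (i) and (ii), where the Twin Lemma ceiling is only $6$ but the truth is $8$, I would exhibit explicit colourings of the Wagner graph $W_8$: for (ii), a $2$-edge-colouring alternating the two colours along the $8$-cycle and choosing the chord colours so that every non-adjacent pair finds a common neighbour with differently coloured incident edges; for (i), an analogous orientation using a consistently directed cycle and suitably oriented chords. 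The key combinatorial point is that each vertex of $W_8$ is the \emph{unique} common neighbour of exactly one pair among its three neighbours, leaving enough local freedom to satisfy all constraints simultaneously.

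For the upper bound, I would traverse Theorem~\ref{th t-free ppg} family by family. The Twin Lemma caps $K_{1,n}$ at $2m+n+1$ vertices and $K_{2,n+2}$ at exactly $(2m+n)^{2}+2$. In $C_5(p,q)$ the vertices of the ``$v_1v_3$''-fan and the ``$v_1v_4$''-fan share only $v_1$ across classes, giving $p+q\leq 2m+n$ and $|V|\leq 2m+n+5$. In $K_{3,3}(n)$ and $K_{3,4}(n)$, the $n$ subdivision vertices share the neighbourhood $\{a_1,b_1\}$, but the single-common-neighbour constraints with the other four or five vertices of the base trim their admissible state pairs into a grid of size at most $(2m+n-1)^{2}$, yielding $|V|\leq 6+(2m+n-1)^{2}$ and $|V|\leq 7+(2m+n-1)^{2}$, both at most $(2m+n)^{2}+2$ once $2m+n\geq 3$. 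The graphs $K_{3,3}$, $K_{3,4}$ and the seven special graphs contribute at most $11$ vertices, so for $2m+n\geq 3$ the bound $(2m+n)^{2}+2\geq 11$ dominates and part (iii) follows. For parts (i) and (ii) one must further eliminate each of $W_8^{+}$, $P_{10}$, $M_{11}^{=}$, $M_{11}^{-}$, $M_{11}$: in $P_{10}$ the ``three pairwise non-adjacent neighbours at a vertex'' argument immediately forces three distinct states, contradicting $2m+n=2$, and an analogous local inspection rules out each of the remaining four graphs.

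The principal obstacle is exactly this last piece of the upper bound. Since the Twin Lemma ceiling is only $6$ while the actual maximum for parts (i) and (ii) is $8$, one cannot avoid a case-by-case local inspection of every $9$- to $11$-vertex graph in the characterization, chasing forced states along a small subgraph until a contradiction emerges. This bookkeeping -- rather than the Twin Lemma itself -- is the most technical step of the proof.
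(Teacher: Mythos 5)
Your overall strategy is the right one --- the paper offers no proof at all beyond the remark that the theorem ``implies directly'' from Theorem~\ref{th t-free ppg} and Proposition~\ref{prop clique1}, so your state-based reformulation (a valid $2$-path $uvw$ exists if and only if the two incidences at $v$ lie in different ones of the $2m+n$ states) and the resulting Twin Lemma supply exactly the missing bridge, and both are correct. But there are concrete gaps. A small one: the witness for (iii) should be $K_{2,(2m+n)^2}$, whose \emph{total} order is $(2m+n)^2+2$; as written you try to index $(2m+n)^2+2$ degree-$2$ vertices by only $(2m+n)^2$ state pairs. More seriously, your bound for $C_5(p,q)$ does not follow from the stated reason: vertices inside the \emph{same} fan need not have distinct states at $v_1$ (they may instead be separated at $v_3$ or $v_4$), so ``sharing only $v_1$ across classes'' only yields that the two fans use disjoint \emph{sets} of $v_1$-states. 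The resulting product bound (including $v_2$ and $v_5$ with the fans) gives only $|V|\le (2m+n)^2+3$, which exceeds the claimed maximum by one. You need the further observation that $v_4$ (resp.\ $v_3$) is non-adjacent to every vertex of the $v_1v_3$-fan (resp.\ $v_1v_4$-fan) with unique common neighbour $v_3$ (resp.\ $v_4$), which forbids one value in the second coordinate and yields $|V|\le (2m+n)((2m+n)-1)+3\le (2m+n)^2+2$.

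Finally, parts (i) and (ii) are where essentially all of the work lies, and they remain unproven. The eliminations you describe explicitly are fine ($P_{10}$ and the $M_{11}$ family all contain at least three pairwise non-adjacent vertices whose unique common neighbour is a single hub, forcing at least three states where only $2m+n=2$ exist), but $W_8^+$ admits no such one-line obstruction and must be killed by the forced-state chase you defer; and the existence of an orientation (resp.\ a $2$-edge-colouring) of $W_8$ realizing an $8$-vertex clique is asserted rather than exhibited --- with only $12$ edges and four non-neighbours per vertex this is a tight constraint-satisfaction problem, and it is precisely the content of the claim that the answer is $8$ rather than $(2m+n)^2+2=6$. (If $W_8$ failed, you would still have to test $K_{3,4}(1)$ and $K^*_{3,4}$, the other $8$-vertex graphs in the characterization.) So the skeleton is sound and matches the derivation the paper intends, but the proof is not complete as it stands.
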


\begin{proof}
	(i) Observe that  the graph $W_8^{+}$ is an underlying $(1,0)$-clique on $9$ vertices (see Fig.~\ref{fig mixed cliques}(a) for the relevant instance). 
	This implies $\omega_{a(1,0)}(\mathcal{PP}_2) \geq 9$.

 \begin{figure}
    \centering
    \begin{tabular}{cc}

         \includegraphics[page=4]{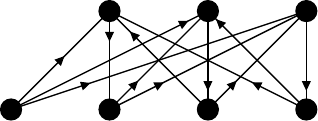}
    
 & \includegraphics[page=3]{ppg_signed_push.pdf}
     \\
        (a) & (b) 
    \end{tabular}
    \caption{(a) A $(1,0)$-clique on $9$ vertices. (b) A $(0,2)$-clique on $8$ vertices.}
    \label{fig mixed cliques}
\end{figure}


	Let $G^*$ be a triangle-free projective planar $(1,0)$-clique having at least $9$ vertices. Thus, by Observation~\ref{obs und clique diam2} its underlying graph, say $G$, must belong to $\mathcal{PP}_2$. 
	We are going to show that such a $G^*$ does not exist. 
	Due to Theorem~\ref{th t-free ppg}, 
	it is enough to restrict ourselves 
	to checking whether any graph listed in the theorem can be $G$ or not.

	As all Plesn\'ik graphs are triangle-free planar graphs, and as it is known~\cite{CHAKRABORTY202329} that the largest triangle-free planar underlying $(1,0)$-clique has six vertices. 
	Moreover, $K_{3,3}, K_{3,4}, W_8, W_8^{+}, M_{11}^{=}$, and $K_{3,4}^{*}$ has less or equal to nine vertices. Furthermore, using Proposition~\ref{prop clique1} it is possible to verify that $P_{10}, M_{11}$, and $M_{11}^{-}$ are not underlying $(1,0)$-cliques. Hence, we are only left with verifying whether it is possible to have $K_{3,4}(t)$ or $K_{3,3}(t)$ as $G$ or not, where $t \geq 3$.

	Let us suppose that $G$ is either  $K_{3,4}(t)$ or $K_{3,3}(t)$ for some $t \geq 2$, and $a_1, a_2, \ldots, a_t$ are its vertices of degree two. 
	Notice that, all the $a_i$s are adjacent to exactly two vertices, 
	say $b_1, b_2$. Note that, there are vertices $b_1'$ and $b_2'$ such that
	$b_j'$ is non-adjacent to $a_i$ and the only $2$-path connecting $b_j'$ to $a_i$ is $b_j$, for all $j \in \{1,2\}$. 
	Thus in $G^*$, all $a_i$s must see $b_j'$ via $b_j$, which implies that all $a_i$s must agree with each other on both $b_1$ and $b_2$. Thus, $a_1$ is neither adjacent to $a_2$ nor there is a special $ 2$ path among them. 
	Hence $G$ cannot be an underlying $(1,0)$-clique if it is either  $K_{3,4}(t)$ or $K_{3,3}(t)$ for some $t \geq 2$.

	\bigskip

	(ii) Observe that  the graph $W_8$ is an underlying $(0,2)$-clique on $8$ vertices (see Fig.~\ref{fig mixed cliques}(b) for the relevant instance). 
	This implies $\omega_{a(0,2)}(\mathcal{PP}_2) \geq 8$. 
	The proof of the upper bound can be done similarly to the proof of (i).

	\bigskip

	(iii) It is known~\cite{CHAKRABORTY202329} that $\omega_{a(m,n)}(\mathcal{P}')= (2m+n)^2 +2$,  for all $2m+n \geq 3$
	for the family of 
	triangle-free planar graphs.  
	This implies  $\omega_{a(m,n)}(\mathcal{PP}_2)= (2m+n)^2 +2$, 
	 for all  $2m+n \geq 3$.
	
	For the upper bound, as $(2m+n)^2 +2 \geq 11$ for all $2m+n \geq 3$, and as every non-planar graphs except $K_{3,4}(t)$ and $K_{3,3}(t)$ for $t \geq (2m+n)^2 - 4$ from the graphs listed in Theorem~\ref{th t-free ppg} has less than or equal to $11$ vertices, it is enough to show that  $K_{3,4}(t)$ and $K_{3,3}(t)$
	are not underlying $(m,n)$-cliques for $t \geq (2m+n)^2 - 4$. 
	
	Let $G^*$ be an $(m,n)$-clique having at least $(2m+n)^2+3$ vertices such that its underlying graph $G$ is 
	$K_{3,4}(t)$ and $K_{3,3}(t)$ for some $t \geq (2m+n)^2 - 4$. 
	Let $a_1, a_2, \ldots, a_t$ be the vertices of degree two of $G$. 
	Notice that, all the $a_i$s are adjacent to exactly two vertices, 
	say $b_1, b_2$. 
	Note that, there are vertices $b_1'$ and $b_2'$ such that
	$b_j'$ is non-adjacent to $a_i$ and the only $2$-path connecting $b_j'$ to $a_i$ is $b_j$, for all $j \in \{1,2\}$. 
	Thus in $G^*$, all $a_i$s must see $b_j'$ via $b_j$. This implies that the adjacency between $a_i$ and $b_j$ is different from the adjacency between $b_j'$ and $b_j$, for each $i$ and $j$. 
	This implies that the adjacency between $a_i$ and $b_j$ can be one of the $(2m+n)-1$ types (each type of arc gives two adjacency options due to directions, and each type of edge gives one adjacency option). As $a_i$ also must see each other via $b_1$ or $b_2$, the number of $a_i$s is bounded by 
	$$t \leq (2m+n-1)^2 = (2m+n)^2 - 2(2m+n) +1 \leq (2m+n)^2 -5$$
	for all $(2m+n) \geq 3$. However, this is a contradiction as we assumed $t \geq (2m+n)^2 - 4$.
\end{proof}

Now we turn our focus towards variants of colored homomorphism of 
$(0,2)$ and $(1,0)$-colored mixed graphs (that is, $2$-edge-colored graphs and oriented graphs). The variants are known as homomorphisms of signed graphs~\cite{rezasigned} and pushable homomorphisms of oriented graphs~\cite{klostermeyer04}, respectively. 

Homomorphisms of signed graphs were introduced by Naserasr, Rollov\'a, and Sopena~\cite{rezasigned} who also defined and characterized signed absolute clique and signed absolute clique numbers. Naserasr, Rollov\'a, and Sopena~\cite{rezasigned} also showed how using the notion of 
homomorphism of signed graph one can capture, as well as extend, many of the classical graph theory results and conjectures including the Four-Color Theorem and Hadwiger's Conjecture~\cite{hadwiger}. It motivated a number of research works and generated a lot of interest within 
a short span of time~\cite{florentjctb, florentdam, dasprabhusen, rezasigned, pascalalexsen}.  In order to avoid a long series of definitions, we would like to define the notion using its equivalent characterization. 

A \textit{signed graph} $(G, \Sigma)$ is a graph with either positive
or negative sign assigned to its edges. 
A \textit{signed absolute clique} $(C, \Lambda)$ 
is a signed graph whose any two non-adjacent vertices are part of a $ 4$ cycle having an odd number of negative edges. 
Given a family 
$\mathcal{F}$ of signed graphs,
$$\omega_{as}(\mathcal{F}) = \max\{|V(G)|: 
G \in \mathcal{F} \text{ and } (G, \Sigma) \text{ is a  signed absolute clique}\}.$$

On the other hand, pushable homomorphism of 
oriented graphs were introduced by Klostermeyer and MacGillivray~\cite{klostermeyer} which motivated some further research works 
on that topic. These  include  one work 
due to Bensmail, Nandi, and Sen~\cite{bensmailnandisen} that introduced and characterized the notion of pushable absolute clique of oriented graphs.   In order to avoid a long series of definitions, we would like define the notion using its equivalent characterization. 

An \textit{oriented graph} $\overrightarrow{G}$ is a directed graph without any directed cycle of length $1$ or $2$.  
A \textit{pushable absolute clique} $\overrightarrow{C}$ 
is an oriented graph whose two non-adjacent vertices are part of a $ 4$ cycle having an odd number of arcs in a clockwise direction.  
Given a family 
$\mathcal{F}$ of oriented graphs,
$$\omega_{ap}(\mathcal{F}) = \max\{|V(\overrightarrow{G})|: 
\overrightarrow{G} \in \mathcal{F} \text{ is a pushable absolute clique}\}.$$

Thus from Theorem~\ref{th t-free ppg} and  the two characterizations (definitions), we have the following theorem.

\begin{theorem}
For the families $\mathcal{PP}_2$ (resp.  $\mathcal{PP}_2$) of oriented (resp. signed) triangle-free projective-planar graphs 
$$\omega_{ap}(\mathcal{PP}_2)=\omega_{as}(\mathcal{PP}_2)= 7.$$
\end{theorem}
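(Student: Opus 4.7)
The plan is to translate the signed and pushable absolute clique conditions into a condition on the underlying graph and then exhaust the list provided by Theorem~\ref{th t-free ppg}. Both characterizations stated in the excerpt require a $4$-cycle through every pair of non-adjacent vertices, so the underlying graph of any signed/pushable absolute clique in $\mathcal{PP}_4$ must be triangle-free with diameter two and must have every non-adjacent pair sharing at least two common neighbours. Once a candidate underlying graph passes this test, one must still verify that the required parity of negative edges (respectively of clockwise arcs) on such a $4$-cycle can be realised.

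Scanning the classification of Theorem~\ref{th t-free ppg}, most candidates fail the two-common-neighbour condition at once: any two leaves of $K_{1,n}$ share only the centre; the pair $\{v_2,v_4\}$ of the inner $5$-cycle of $C_5(m,n)$ shares only $v_3$; in $K_{3,3}(n)$ and $K_{3,4}(n)$ any subdivision vertex shares exactly one neighbour with a vertex of the opposite part that is not on the subdivided edge. Among the seven graphs of Figure~\ref{fig:family-D}, the Petersen graph $P_{10}$ has the classical property that every non-adjacent pair has a unique common neighbour, the Wagner graph $W_8$ has vertices at cyclic distance $2$ on the outer $8$-cycle sharing only one neighbour, and an analogous inspection of $W_8^+$, $M_{11}$, $M_{11}^-$, $M_{11}^=$, and $K_{3,4}^*$ produces a bad non-adjacent pair in each. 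This leaves only $K_{2,n}$ ($n\ge 2$), $K_{3,3}$ and $K_{3,4}$ as possible underlying graphs.

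For $K_{2,n}$ the $4$-cycle through two leaves $a_i,a_j$ is unique (through the two centres $c_1,c_2$), so its sign factors as $\epsilon_i\epsilon_j$ with $\epsilon_i:=\sigma(c_1a_i)\sigma(c_2a_i)$; demanding $\epsilon_i\epsilon_j=-1$ for all $i\ne j$ is satisfiable only for $n\le 2$, and the same parity obstruction rules out pushable orientations when $n\ge 3$, so $K_{2,n}$ contributes at most four vertices. For the lower bound I would exhibit an explicit signing of $K_{3,4}$ (for instance, negate the three edges $a_1b_1,a_2b_2,a_3b_3$ and leave the remaining nine positive; a short case check shows that every non-adjacent pair within either part lies on some negative $4$-cycle) together with an analogous orientation giving the pushable property. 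Combined with the upper bound from the case analysis and with the trivial bound $|V(K_{3,3})|=6<7$, this yields $\omega_{as}(\mathcal{PP}_4)=\omega_{ap}(\mathcal{PP}_4)=7$.

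The main obstacle is the inspection of the seven exceptional graphs of Figure~\ref{fig:family-D}: although each is expected to be killed by the two-common-neighbour test, identifying a bad pair in each of them (especially in $M_{11}^-$, $M_{11}^=$ and $K_{3,4}^*$ whose structure is less symmetric than that of $M_{11}$ and $W_8$) requires some case work on the drawings. Should any of these graphs unexpectedly pass the two-common-neighbour test, a secondary argument on the parity of $4$-cycle signs (or orientations) would be required to rule it out, along the same lines as the $K_{2,n}$ argument above; I expect such an argument to be straightforward in each residual case.
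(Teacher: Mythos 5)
Your proposal is correct and takes essentially the same route as the paper, which simply asserts that the result "follows directly" from Theorem~\ref{th t-free ppg} and the two characterizations; your exhaustive pass over the classified graphs, using the necessary condition that every non-adjacent pair lies on a $4$-cycle (hence has two common neighbours) plus the parity argument for $K_{2,n}$ and an explicit signing/orientation of $K_{3,4}$, is exactly how that direct implication is carried out. The case work you defer does go through: in $W_8^+$ the pair $\{b,d\}$ shares only $c$, in each of $M_{11}$, $M_{11}^-$, $M_{11}^=$ two consecutive inner vertices such as $f,g$ share only the hub $k$, and in $K_{3,4}^*$ the pair $\{a,f\}$ shares only $b$.
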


\begin{proof}
Observe that  there exist a signed absolute clique and a pushable absolute clique having $K_{3,4}$ as their underlying graphs  (see Fig.~\ref{fig push-signed cliques} for the relevant instances). 
	This implies $\omega_{as}(\mathcal{PP}_2) \geq 7$ and 
	$\omega_{ap}(\mathcal{PP}_2) \geq 7$. 

\begin{figure}
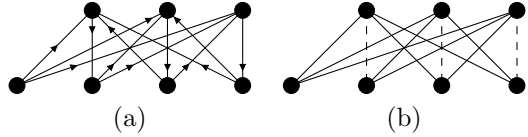

    \centering
    \begin{tabular}{cc}

         \includegraphics[page=1]{ppg_signed_push.pdf}
    
 & \includegraphics[page=2]{ppg_signed_push.pdf}
     \\
        (a) & (b) 
    \end{tabular}
    \caption{(a) A pushable absolute clique on $7$ vertices. (b) A signed absolute clique on $7$ vertices.}
    \label{fig push-signed cliques}
\end{figure}	


For the upper bound, note that if $G$ is the underlying graph of a pushable absolute clique or a signed  absolute clique, then it must have the following property: any two non-adjacent vertices of $G$ must be connected 
by two internally disjoint $2$-paths. Observe that, among the graphs listed in Theorem~\ref{th t-free ppg}, the only graphs that have this property are $K_{3,3}$, $K_{3,4}$, and $K_{2,t}$ for $t \geq 2$. 
However, notice that a pushable absolute clique (resp., signed absolute clique) is, in particular, a $(1,0)$-clique (resp., $(0,2)$-clique). Moreover, the $(1,0)$-absolute clique number 
(resp., $(0,2)$-absolute clique number) for the family of triangle-free planar graphs is at most $6$~\cite{CHAKRABORTY202329}. As $K_{2,t}$ is a triangle-free planar graph, for all $t \geq 2$, we are done. 
\end{proof}

\section{Conclusions}\label{sec conclusions}
In this paper, we gave a characterisation of triangle-free  projective planar graphs of diameter $2$ and proved that the domination number of this class of graphs is at most $3$. Moreover, there are only seven triangle-free projective planar graphs with a diameter 2 for which the equality holds. This raises a natural question.

\begin{question}\label{Q type1}
Given a surface $\mathbb{S}$, can you find a tight upper bound on the domination number of triangle-free graphs with diameter $2$ that can be embedded on $\mathbb{S}$?
\end{question}

Goddard and Henning~\cite{goddard2002} proved that for any 
integer $g \geq 0$ the number of graphs with orientable genus $g$, diameter $2$, and domination number greater than $2$ is finite. 
For the $g=0$ case, that is for planar graphs, they reported~\cite{goddard2002} that there exists only one planar graph of diameter two and domination number greater than two. However, the analogous problem for higher genus surfaces is still unsolved. 
This motivates the following question.

\begin{question}\label{Q type2}
Given a positive integer $g$, how many triangle-free graphs of genus $g$ and diameter $2$ are there with a domination number greater than or equal to $3$?
\end{question}

We observed that the maximum order of a triangle-free projective planar graph with diameter $2$ and domination number $3$ is $11$. This motivates the following.

\begin{question}\label{Q type3}
Given a positive integer $g$, 
what is the highest order of a triangle-free graph of genus $g$ and diameter $2$ with a domination number greater than $3$?
\end{question}

\section{Acknowledgements}
We thank Qianping Gu for providing the code to find embeddings of projective-planar graphs~\cite{yu2014}. We thank all the anonymous reviewers who have gone through the draft multiple times minutely and their suggestions which have greatly enhanced the presentation and the readability of the article.





\bibliography{reference}

\begin{thebibliography}{10}
\expandafter\ifx\csname url\endcsname\relax
  \def\url#1{\texttt{#1}}\fi
\expandafter\ifx\csname urlprefix\endcsname\relax\def\urlprefix{URL }\fi
\expandafter\ifx\csname href\endcsname\relax
  \def\href#1#2{#2} \def\path#1{#1}\fi

\bibitem{plesnik1975}
J.~Plesn\'ik, Critical graphs of given diameter, Acta Fac. Rerum Natur. Univ.
  Comenian. Math. 30 (1975) 71--93.

\bibitem{west}
D.~B. West, Introduction to Graph Theory, 2nd Edition, Prentice Hall, 2000.

\bibitem{mohar2001}
B.~Mohar, C.~Thomassen, Graphs on Surfaces, Johns Hopkins University Press,
  2001.

\bibitem{macgillivray1996}
G.~MacGillivray, K.~Seyffarth, Domination numbers of planar graphs, Journal of
  Graph Theory 22~(3) (1996) 213--229.

\bibitem{goddard2002}
W.~Goddard, M.~A. Henning, Domination in planar graphs with small diameter,
  Journal of Graph Theory 40~(1) (2002) 1--25.

\bibitem{seymour2004}
N.~Robertson, P.~D. Seymour, Graph minors. {XX}. {Wagner's} conjecture, J.
  Comb. Theory Ser. B 92~(2) (2004) 325--357.

\bibitem{archdeacon1981}
D.~Archdeacon, A {K}uratowski theorem for the projective plane, Journal of
  Graph Theory 5~(3) (1981) 243--246.

\bibitem{nesetriljctb}
G.~MacGillivray, K.~Seyffarth, Colored homomorphisms of colored mixed graphs,
  Journal of Combinatorial Theory, Series B 80~(1) (2000) 147--155.

\bibitem{nesetrilbook}
P.~Hell, J.~Ne\u{s}et\u{r}il, Graphs and homomorphisms, Oxford University
  Press, 2004.

\bibitem{ericsurvey}
E.~Sopena, Homomorphisms and colourings of oriented graphs: An updated survey,
  Discrete Mathematics 339~(7) (2015) 1993--2005.

\bibitem{pascalalexsen}
P.~Ochem, A.~Pinlou, S.~Sen, Homomorphisms of 2-edge-colored triangle-free
  planar graphs, Journal of Graph Theory 85~(1) (2017) 258--277.

\bibitem{alonmarshall}
N.~Alon, T.~H. Marshall, Homomorphisms of edge-colored graphs and {Coxeter}
  groups, Journal of Algebraic Combinatorics 8 (2015) 5--13.

\bibitem{klostermeyer}
W.~Klostermeyer, G.~MacGillivray, Analogs of cliques for oriented coloring,
  Discussiones Mathematicae. Graph Theory 24~(3) (2004) 373--387.

\bibitem{bensmailduffysen}
J.~Bensmail, C.~Duffy, S.~Sen, Analogous to cliques of (m,n)-colored mixed
  graphs, Graphs and Combinatorics 33 (2017) 735--750.

\bibitem{CHAKRABORTY202329}
D.~Chakraborty, S.~Das, S.~Nandi, D.~Roy, S.~Sen, On clique numbers of colored
  mixed graphs, Discrete Applied Mathematics 324 (2023) 29--40.

\bibitem{rezasigned}
R.~Naserasr, E.~Rovoll\'a, E.~Sopena, Homomorphisms of signed graphs, Journal
  of Graph Theory 79~(3) (2015) 178--212.

\bibitem{klostermeyer04}
W.~Klostermeyer, G.~MacGillivray, Homomorphisms and oriented colorings of
  equivalence classes of oriented graphs, Discrete Mathematics 274 (2004)
  161--172.

\bibitem{hadwiger}
H.~Hadwiger, {\"Uber} eine {K}lassifikation der {S}treckenkomplexe,
  Vierteljschr Naturforsch Ges Z\"urich 88 (1943) 133--142.

\bibitem{florentjctb}
L.~Beaudou, F.~Foucaud, R.~Naserasr, Homomorphism bounds and edge-colourings of
  {$K_4$}-minor-free graphs, Journal of Combinatorial Theory, Series {B} 124
  (2017) 128--164.

\bibitem{florentdam}
L.~Beaudou, F.~Foucaud, R.~Naserasr, Homomorphism bounds of signed bipartite
  {$K_4$}-minor-free graphs and edge-colorings of $2k$-regular
  {$K_4$}-minor-free multigraphs, Discrete Applied Mathematics 261 (2019)
  40--51.

\bibitem{dasprabhusen}
S.~Das, S.~Prabhu, S.~Sen, A study on oriented relative clique number, Discrete
  Mathematics 341~(7) (2018) 2049--2057.

\bibitem{bensmailnandisen}
J.~Bensmail, S.~Nandi, S.~Sen, On oriented cliques with respect to push
  operation, Discrete Applied Mathematics 232 (2017) 50--63.

\bibitem{yu2014}
J.~Yu, A practical torus embedding algorithm and its implementation (Masters
  Thesis, Applied Sciences: School of Computing Science, Simon Fraser
  University, 2014).

\end{thebibliography}

\end{document}